\newtheorem{thm}{Theorem}
\newtheorem{prop}{Proposition}
\newtheorem{cor}{Corollary}
\newtheorem{lem}{Lemma}
\theoremstyle{definition}
\newtheorem{defi}{Definition}
\renewcommand{\sl}{{\mathfrak{sl}}}
\renewcommand{\d}{\partial}
\newcommand{\om}{\omega}
\newcommand\ot{\otimes}
\newcommand{\g}{\mathfrak g}
\newcommand{\gsl}{\mathfrak{sl}}
\newcommand{\gp}{\mathfrak p} 
\newcommand{\gll}{\mathfrak{l}} 
\newcommand{\gl}{\mathfrak{gl}} 
\newcommand{\gk}{\mathfrak k} 
\newcommand{\gs}{\mathfrak s}
\newcommand{\gm}{\mathfrak m}
\newcommand{\gr}{\operatorname{gr}}
\newcommand{\im}{\operatorname{im}}
\newcommand{\Hom}{\operatorname{Hom}}
\newcommand{\Sym}{\operatorname{Sym}}
\newcommand\we{\wedge}
\newcommand{\oo}{\theta}
\newcommand{\G}{\mathcal G}
\newcommand{\CN}{\mathcal{C}N}
\newcommand{\R}{\mathbb R}
\newcommand{\wi}{\tilde}
\newcommand{\tr}{\operatorname{tr}}
\newcommand{\gh}{\mathfrak h}
\newcommand\al{\alpha}
\newcommand{\glm}{{\mathfrak {gl}}_m}
\newcommand{\E}{\mathcal E}
\newcommand{\A}{\mathcal A}
\newcommand{\B}{\mathcal B}
\newcommand{\C}{\mathcal C}
\newcommand{\PP}{\mathcal P}
\renewcommand{\d}{\partial}
\newcommand{\Ad}{\operatorname{Ad}}
\newcommand{\ww}{\omega}
\newcommand{\dd}{\operatorname{d}}
\newcommand{\Om}{\Omega}
\newcommand{\so}{\mathfrak {so}}
\newcommand{\som}{{\mathfrak {so}}_{m+2,1}}
\newcommand{\OO}{\Omega}
\theoremstyle{remark}
\title{On the geometry of conformal geodesics equations}
\author{Alexandr Medvedev }
\address{
 Department of Mathematics and Statistics \\ Masaryk  University \\  2, Kotl\'a\v rsk\'a str., 611 37 Brno, Czech Republic}
\thanks{Supported by the project CZ.1.07/2.3.00/20.0003
  of the Operational Programme Education for Competitiveness of the Ministry
  of Education, Youth and Sports of the Czech Republic. The author would like to thank Boris Doubrov, Andreas \v Cap and Vojt\v ech Z\' adn\' ik for helpful conversations.}
\begin{document}

\begin{abstract}

Many geometries can be studied with the help of  distinguished curves. The most known type of such curves are geodesics of Riemannian or projective geometry. In these two cases distinguished curves are specified by a system of the second order ODEs. This fact gives a relation between Riemannian or projective geometries and the geometry of differential equations. 

In the article the conformal geometry is considered. We  answer to the question whether a system of the 3rd order ODEs describes geodesics of a conformal structure. We construct a functor from a category of conformal geometries to a category of Cartan geometries associated to the 3rd order ODEs systems. Explicit formulas which define the family of all equations on conformal geodesics are given in the last section of the article.

\emph{Key words and phrases:} conformal geometry, conformal geodesics, geometry of ordinary differential equations,  Cartan connections.

\emph{2010 Mathematics Subject Classification:} primary  34A26; secondary 53A30, 53B15.
\end{abstract}

\maketitle

\section{Introduction}

Distinguished curves of parabolic geometries usually contains a lot of information about underlying geometry.
The geodesics of a projective or Riemannian manifold,  conformal geodesics of a conformal manifold and  Chern-Moser chains of a CR-manifold is a well-known examples of distinguished curves.

 We work with the conformal geometry, which is closely related to the geometry of the third order differential equations. We answer the question whether the given system of the 3rd order ODEs describes conformal geodesics of a conformal structure.

\begin{defi}
A \emph{conformal geodesic} on a conformal manifold $ M $ is a curve on $ M$,
which development in a model space is a circle.
\end{defi}

As it was shown by Yano \cite{yano}, conformal geodesics infinitesimally determines the conformal geometry itself or, more precisely, each infinitesimal symmetry of the defining equation for conformal geodesics is a conformal symmetry. 

Every conformal geodesic is uniquely defined by its second jet. The family of all conformal geodesics of a conformal manifold is defined by a system of third order ODEs \cite{Schouten,Eastwood}.


In the given article we obtain a direct inclusion of the conformal geometry to the geometry of the third order ODEs systems. Both these geometries can be described in terms of Cartan connections. While conformal geometry is parabolic, the geometry associated with the system of the 3rd order ODEs has a model with non-semisimple Lie algebra. 

The geometry of the system of the third order ODEs was studied in \cite{medv2}. The main result of this paper is that to every system of the third order ODEs we can associate an unique characteristic Cartan connection of special type. This fact allows to find fundamental invariants of the system, which are the essential parts of the associated Cartan connection curvature. 

Below in the introduction we remind the basics of the geometry associated with the system of the third order ODEs and describe correspondence space and extension functor construction briefly. Using these two components we obtain an algebraic relation between conformal geometry and the third order ODEs systems geometry in the second part of this paper. In the third part we state and prove necessary and sufficient conditions on conformal geodesics equations. In the final part we list and prove explicit formulas for these conditions.
\subsection{The geometry of the 3rd order ODEs systems}
Consider an arbitrary system of $m$ ordinary differential equations of the third order:
 \begin{equation} \label{1}
 y_i'''(x)=f_i(y_j''(x),y_k'(x),y_l(x),x),
\end{equation}
where $ i,j,k,l=1,\dots,m$ and $m\ge2 .$

Let $J^3(\R^{m+1},1)$ be the third jet space of unparameterized curves. Then the equations~\eqref{1} is a submanifold $\E$ in $J^3(\R^{m+1},1)$. We introduce the following coordinate system on the surface $\E$:
\[(x,y_1,\dots,y_m,p_1=y_1',\dots,p_m=y_m',
q_1=y_1'',\dots,q_m=y_m''). \]

With every system of the third order ODEs we can associate a pair of distributions.  Let $\pi^2_1$ be the canonical projection from the surface $\E$ to the first jet space
$J^1(\R^{m+1},1)$.  We denote a kernel of a differential $\dd\pi^2_1$ as $V$. One-dimensional distribution $E$ is a distribution whose integral curves are the lifts of solutions of equations \eqref{1}. In coordinates distributions $E$, $V$ have the form:
\begin{align*}
E&=\left\langle \frac{\partial}{\partial x}+p_i\frac{\partial}{\partial y_i}+ q_i\frac{\partial}{\partial p_i} + f^i\frac{\partial}{\partial q_i}\right\rangle,\\
V&=\left\langle \frac{\partial}{\partial q_i}\right\rangle,
\end{align*}
where $i,j=1,\dots,m$.

Define a distribution $C$ as the direct sum of the distributions $E$ and $V$. Then $C$ and its subsequent brackets define a filtration of a tangent bundle $T\E$:
  \[C=C^{-1}\subset C^{-2} \subset C^{-3} = T\E ,\]
where $C^{-i-1}=C^{-i}+[C^{-i},C^{-1}]$.

Using the theory developed by Tanaka \cite{tan70} we can effectively construct a Cartan geometry associated to the distribution $C$. This Cartan geometry has an infinitesimal model $(\g,\gh).$ The Lie algebra $\g$ has the following form:
\[
\g=\left(\gsl _2\times \gl_m\right) \rightthreetimes \left(V_2 \otimes W\right),
\]
where $V_2$ is a 3-dimensional irreducible $\sl_2$-module and $W$ is an $m$-dimensional vector space.

Let us fix a basis of the Lie algebra $\gsl_2$ and the $\gsl_2$-module $V_2$. We consider elements $x,y,h$ to be
standard basis of an algebra $\gsl_2$ with relations:
\[
  [x,y]=h,\quad [h,x]=2x, \quad [h,y]=-2y.
\]
Elements ${v_0,v_1,v_2}$ form a basis of the module $V_2$ such that \[x. v_2=v_1,\quad x.v_1=v_0,\quad x.v_0=0.\]
We define the grading of the Lie algebra $\g$ as follows:
 \begin{align*}
\g_1&=\langle y \rangle,&  \g_0&=\langle h,\gl_m \rangle,& &\\ \g_{-1}&=\langle
x
\rangle
+\langle v_2\otimes W \rangle,&  \g_{-2}&=\langle v_1\otimes W \rangle, &
 \g_{-3}&=\langle v_0\otimes W \rangle.  
\end{align*} 
Subalgebra $\gh$ is exactly the nonnegative part of the Lie algebra $\g.$

\begin{defi}
We say that a coframe $\{\omega_{-3}^i,\omega_{-2}^i,\omega_{-1}^i,\omega_x \}$ on $\E$ is \emph{ adapted to the equation} \eqref{1} if:

\begin{itemize}
 \item an annihilator of the forms $\omega_{-3}^i,\omega_{-2}^i,\omega_x$ is $V$;
 \item an annihilator of the forms $\omega_{-3}^i,\omega_{-2}^i,\omega_{-1}^i$ is $E$;
 \item an annihilator of the forms $\omega_{-3}^i$ is $C^{-2}$. 
\end{itemize} 
\end{defi}
Let $\overline{\pi}\colon P\rightarrow\E$ be a principle $H$-bundle and let $\overline\omega$ be and arbitrary Cartan connection of type $(G,H)$ on $P.$ The connection $\overline \omega$ can be written as:
\[
 \overline{\omega }=\overline{\omega }^i_{-3} v_0\otimes e_i +
 \overline{\omega }^i_{-2} v_1\otimes e_i +
 \overline{\omega }^i_{-1} v_2\otimes e_i + \overline{\omega }_x x + \overline{\omega }_h h +
\overline{\omega }^i_j e^j_i + \overline{\omega }_y y ,
\]
where $e_i$ is a basis of vector space $W$ and $e^j_i$ is compatible basis of $\gl_m.$ We also use notation $e^i_{-3},e^i_{-2},e^i_{-1}$ for $v_0\ot e_i,v_1\ot e_i,v_2\ot e_i$ in the future.
\begin{defi}
 We say that a Cartan
connection $\overline{\omega}$ on the principal $H$-bundle $\overline{\pi}$ is \emph{adapted to the equations \eqref{1}}, if
for any local section $s$ of $\overline{\pi}$ the set
 \[\{ s^* \overline{\omega}_x , s^*\overline{\omega}_{-1}^i, s^*\overline{\omega}_{-2}^i,
s^*\overline{\omega}_{-3}^i \}\] is an adapted co-frame on $\E$.
\end{defi}
In the \cite{medv2} it was shown that with every system of the third order ODEs we can uniquely associate a characteristic Cartan connection. This connection is an adapted connection with some restriction on the curvature function. The curvature function can be expressed in terms of fundamental differential invariant with the help of universal covariant derivative. Denote trace-free part of the tensor as $\tr_0.$ There are four fundamental invariants:
\begin{align*} 
\left(W_2\right)^i_j =& \tr_0 \left(\frac{\d f^i}{\d p^j} -
\frac{d}{dx} \frac{\d f^i}{\d q^j} +
\frac{1}{3} \frac{\d f^i}{\d q^k}\frac{\d f^k}{\d q^j} \right), \\
\left(I_2\right)^i_{jk} =& \tr_0\left(\frac{\d^2 f^i}{\d q^j \d
q^k}
 \right), \\
\left(W_3\right)^i_j =&
\frac{\partial f^i}{\partial y^j} +\frac{1}{3} \frac{\partial f^i}{\partial q^k}\frac{\partial f^k}{\partial p^j} - \frac{d}{dx} \frac{\partial f^i}{\partial
p^j} + \frac{2}{3}\frac{d^2}{dx^2} \frac{\partial f^i}{\partial
q^j} + \frac{2}{27}(\frac{\partial f^i}{\partial q^j})^3 \\ 
&-\frac{4}{9}
\frac{\partial f^i}{\partial q^k}\frac{d}{dx}\frac{\partial f^k}{\partial q^j} 
 -
\frac{2}{9} \frac{d}{dx}\left(\frac{\partial f^i}{\partial q^k}\right)\frac{\partial f^k}{\partial q^j} - 2\delta^i_j\frac{d}{dx}H^x ,\\
 \left(I_4\right)_{jk} =& - \frac{\d H_k^{-1}}{\d p_j}+\frac{\d}{\d q_j}\frac{\d}{\d q_k}H^x - \frac{\d}{\d q_k}\frac{d}{dx}H_j^{-1} - \frac{\d }{\d q^k}(H_l^{-1}\frac{\d
f^l}{\d q^j})+2H_j^{-1} H_k^{-1},
\end{align*}  
where \[H_j^{-1}=\frac{1}{6(m+1)}\left(\frac{\d^2 f^i}{\d q^i \d
q^j}
 \right),H^x=-\frac{1}{4m}\left(\frac{\d f^i}{\d p^i} -
\frac{d}{dx} \frac{\d f^i}{\d q^i} +
\frac{1}{3} \frac{\d f^i}{\d q^k}\frac{\d f^k}{\d q^i} \right). \] 

Here and below in the article we use different from \cite{medv2} but equivalent (in terms of fundamental systems of invariants) formula for the invariant $W_3$. The invariants $W_2$ and $W_3$ are called Wilczynski invariants and they appear from linearization of differential equations. The invariants $I_2$ and $I_4$ play crucial role in detecting conformal geodesics equations.
\subsection{Correspondence space}
Correspondence space construction comes from the twistor theory. The main idea is derived from the Penrose correspondence in its general form. Consider a semisimple Lie group $G$ with two parabolic subgroups $P_1$ and  $P_2$. Assume, that  $P_1\cap P_2$ is also parabolic. Then a natural double fibration from  $G/P_1 \cap  P_2$ to $G/P_1$ and  $G/ P_2$ defines a correspondence between  $G/P_1$ and  $G/ P_2$. More details on the Penrose correspondence and its applications can be found in the book \cite{EastP}.

The curved analogue of this construction for the parabolic Cartan geometries was intensively studied by Andreas \v Cap \cite{CapCor, CapTwo}. To be precise, we should say that a correspondence space construction deals only with one part of Penrose correspondence. Namely, let $G$ be a semisimple Lie group with two parabolic subgroups $P_2 \subset P_1 \subset G$. Consider a parabolic Cartan geometry  $(\G \rightarrow N, \ww)$ of type  $(G, P_1)$, where $\G$ is a principal $P_1$-bundle. 
\begin{defi}
A \emph{correspondence space}  of a parabolic geometry  $(\G \rightarrow N, \ww)$ is the orbit space $\CN=\G/P_2$.
\end{defi}  
The orbit space $\CN$ is a smooth manifold. A parabolic Cartan geometry $(\G \rightarrow \CN, \ww)$ of a type $(G, P_2)$ is naturally defined  on the correspondence space $\CN$. Moreover, curvature function $k^{\CN}$ of the correspondence space geometry is specified by the formula $k^{\CN}=j \circ k^N$, where $j$: $\Hom(\we ^2 \g/\gp_1, \g) \rightarrow \Hom(\we ^2 \g/\gp_2, \g)$ is a natural inclusion. 

\subsection{Extension Functor}

Extension functor is a type of construction which helps to describe various relations between Cartan geometries. General theory of extension functors can be found in \cite{CapSloBook}.

Let  $(M, P, \om)$ be an arbitrary Cartan geometry of type $(G,H)$. If $K$ is a closed subgroup of a group  $L$, we can construct a Cartan geometry of type $(L,K)$ over the same manifold $M$ with the help of the following data:
\begin{itemize}
\item
group homomorphism $i:H \rightarrow K$;

\item linear $H$-invariant  map $\al: \g \rightarrow \gll$, which is agreed  with $i$, i.e.  $\al \circ \Ad_H = \Ad_{i(H)} \circ \al$ and $\al|_\gh =  d i$
 \end{itemize}
In order to define new Cartan geometry over the same manifold we require an induced map $\al: \g/\gh \rightarrow \gll/\gk$ to be a linear isomorphism. We define a  principle $K$-bundle $\tilde{\PP} = \PP \times_i K $ . Let $j: \PP \rightarrow \tilde{\PP}$ be an inclusion defined by the formula $j(u) = (u,e)$. Then there  exists a unique Cartan connection $\om_\al : \tilde{\PP} \rightarrow  \gll $, for which we have $j^* \om_\al = \al \circ \om $. If $i$ and $\al$ are monomorphisms, then we can  think about  $\om_\al$ as a $K$-equivariant prolongation of the connection $\om$.

The curvature of extended geometry has simple relation with  initial geometry. Assume that  $\OO$ and  $\OO_\al$ are curvature tensors of Cartan connections $\ww$ and $\ww_\al$ respectively. Then we have:
\begin{multline*} 
j^* \OO_\al - \al \circ \OO = \dd(j^* \circ \om_\al - \al \circ \om) +  [j^*\om_\al,j^*\om_\al] - \al \circ [\om,\om] \\
= [\al(\om),\al(\om)] - \al \circ [\om , \om] = R_\al(\om) 
\end{multline*} 
Tensor $R_\al$ shows how linear the map $\al$ is far from Lie algebra homomorphism. 

With the help of correspondence space and extension functor construction we describe the geometry of conformal geodesics equations in the following section.

\section{Geometry of conformal geodesics equations}

Consider a flat conformal geometry of the dimension $m+1$. As usual, we define a quadratic form $q_{L}(x)$ on Lorenzian space $L=R^{m+3}$ by the formula
\[
q_L(x)= -2x_0 x_{m+2}-\sum_{i=1}^{m+1}x_i^2.
\]
The vector $V$ is called {\it light-like} if $q_L(V)=0$. A space $N$ of light-like points in $PL$ is called \textit{Mobius space}. This space is a homogeneous manifold:
\[
N=SO_{m+2,1}/P,
\]
where  $P$ is a stabilizer  of a light-like vector in $P L$. Lie algebras  $\so_{m+2,1}$ and $\gp$ of groups $SO_{m+2,1}$ and $P$ respectively have the following form:
\begin{equation}\label{l1}
 \so_{m+2,1} = \left(
\begin{array}{rrrrr}
h & y & q & 0 \\
x & 0 & -s^t & y \\
p & s & r & q^t \\
0 & x & p^t & -h
\end{array}
\right) 
; \qquad
 \gp = \left(
\begin{array}{rrrrr}
h & y & q & 0 \\
0 & 0 & -s^t & y \\
0 & s & r & q^t \\
0 & 0 & 0 & -h
\end{array}
\right)
\end{equation}
The only non-scalar blocks in \eqref{l1} here are $p$, $r$ and $q$. They have dimensions $m \times 1$, $m \times m$ and $1 \times m$ respectively.

\begin{prop}
The second jet space of unparameterized curves of the Mobius space $N$ is a homogeneous space $SO_{m+2,1}/P_2$, where the Lie group $P_2$ is determined by the following Lie algebra:
\[
 \gp_2 = \left(
\begin{array}{rrrrr}
h & y & 0 & 0 \\
0 & 0 & 0 & y \\
0 & 0 & r & 0 \\
0 & 0 & 0 & -h
\end{array}
\right).
\]
\end{prop}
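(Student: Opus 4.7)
The plan is to identify $J^2(N, 1)$ with $G/P_2$ by the standard route: fix a reference $2$-jet, compute its stabilizer in $G$, and check it agrees with the subgroup $P_2$ determined by $\gp_2$.

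I would take as basepoint $o = eP \in N$ and use as reference curve $\gamma_0(t) = \exp(tx) \cdot o$, where $x$ denotes the element of $\g_{-1}$ corresponding to the ``$x$''-entry in \eqref{l1}. Letting $j_0 := j^2_o \gamma_0$, the inclusion $P_2 \subseteq \mathrm{Stab}_G(j_0)$ follows from showing that each generator of $\gp_2 = \langle h, y, r\rangle$ preserves $\gamma_0$ as an unparameterized curve: $\Ad(\exp(\tau h))x$ is a scalar multiple of $x$, hence reparameterizes $\gamma_0$; for $r \in \so_m$ one computes $[r, x] = 0$, so $\gamma_0$ is fixed pointwise; and since $\{x, y, h\}$ forms an $\sl_2$-triple with $y, h \in \gp$, the full $SL_2$-orbit of $o$ coincides with $\gamma_0$, so $\exp(\tau y)$ preserves it as well.

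For the reverse inclusion one must analyse the remaining generators of $\gp$ outside $\gp_2$, namely the components $s_j$ of the $s$-block and $q_j$ of the $q$-block ($j = 1, \dots, m$). A direct matrix computation gives
\[
[s_j, x] = p_j, \qquad [q_j, x] = s_j, \qquad [q_j, s_j] = y.
\]
The first identity shows $\exp(\tau s_j)$ fails to preserve even the $1$-jet, since $p_j$ is linearly independent from $x$ in $\g/\gp$. The latter two yield
\[
\Ad(\exp(\tau q_j))\, x \;=\; x + \tau s_j + \tfrac{\tau^2}{2}\, y,
\]
so $\exp(\tau q_j)$ does preserve the $1$-jet of $\gamma_0$, and I would then show that it acts faithfully on the $m$-dimensional quotient $P_1/P_2$ parametrizing $2$-jets over a fixed $1$-jet. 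Combined with the transitivity of the $G$-action on $J^2(N, 1)$ (via the $CO(m+1)$-action on tangent directions and the $\exp(\g_1)$-action on second-order jets over a fixed direction), this gives an equivariant embedding $G/P_2 \hookrightarrow J^2(N, 1)$; the dimension match $\dim G/P_2 = (m+1) + 2m = 3m+1 = \dim J^2(N, 1)$ then promotes it to an isomorphism.

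The main obstacle will be the $q_j$-step: one has to translate the Lie-algebraic action $\Ad(\exp(\tau q_j))x$ into the geometric action on the unparameterized $2$-jet of $\gamma_0$ and verify that distinct $q_j$'s produce distinct $2$-jets modulo the $1$-jet. This is cleanest done by writing $\exp(\tau q_j)\gamma_0(t) = \exp(\eta(t,\tau))\cdot o$ with $\eta(t,\tau) \in \g_{-1}$ via Baker--Campbell--Hausdorff and reading off the coefficient of $t^2$ modulo $\R x$, which by the commutator computation above is a nonzero multiple of $p_j$.
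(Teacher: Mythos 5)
Your proposal is correct and follows essentially the same route as the paper: realize the jet space as the $SO_{m+2,1}$-orbit of a reference jet and compute the stabilizer via the adjoint action on $\so_{m+2,1}/\gp$ and its prolongation. The only difference is organizational — the paper factors the computation through $J^1(N,1)=SO_{m+2,1}/P_1$ and then repeats the construction, while you compute the stabilizer of the $2$-jet of the reference circle $t\mapsto\exp(tx)\cdot o$ in one step; your bracket identities $[r,x]=0$, $[s_j,x]=p_j$, $[q_j,x]=s_j$, $[q_j,s_j]=y$ and the dimension count $3m+1$ all check out.
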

\begin{proof}
The tangent space at a point $p$ can be identified with $\so_{m+2,1}/\gp$. The action of the group $P$ on  $TN_p$ can be realized as an adjoint action on $\so_{m+2,1}/\gp$. We denote a group, which preserves an chosen point in $TN$, as $P_1$. The group $P_1$ has the following Lie algebra:
\[
 \gp_1 = \left(
\begin{array}{rrrrr}
h & y & q & 0 \\
0 & 0 & 0 & y \\
0 & 0 & r & q^t \\
0 & 0 & 0 & -h
\end{array}
\right).
\]
The Lie group $SO_{m+2,1}$ acts transitively on the first jets of Mobius space $N$, therefore the first jets of curves form a homogeneous space $SO_{m+2,1}/P_1$. If we repeat the same construction for the $SO_{m+2,1}/P_1$ we get that the a stabilizer $P_2$ of a point in the second jet space is exactly $P_2$. The fact that the group $P_2$ acts transitively on the 2nd jet space ends the prove.
\end{proof}

Denote as  $\pi \colon \PP \to M$  a principle $P$-bundle with the Cartan form $\om\colon T\PP\to\so_{m+2,1}$. 
We define a new bundle $\overline{\pi}:\PP\to\wi M$ with the same total
space $\PP$ and a new base $\wi M = \PP/P_2$, i.e. the points of $\wi M$ are orbits of the right action of $P_2$ on $\PP$.

With the help of the above construction we introduce a curved analogue of the second jet space for conformal geometry. Next step is to extend this geometry to the Cartan geometry of the system of equations on conformal geodesics of conformal geometry $\pi$.

As above, let $ (G,H)$ be a Cartan geometry type of the system of $m$ ODEs of the third order. Denote a Lie algebra of $G$ as $\g$ and a Lie algebra of $H$ as $\gh$.
 Define an injective map  $\al : \so_{m+2,1} \to \g$  by the formula:
\[
\alpha \left( \left(
\begin{array}{rrrrr}
h & y & q & 0 \\
x & 0 & -s^t & y \\
p & s & r & q^t \\
0 & x & p^t & -h
\end{array}
\right)\right)=\left( \begin{array}{rr} -\frac{1}{2} h &  x \\  \frac{1}{2}  y & \frac{1}{2} h\end{array} \right) +r + \left(v_0 \otimes p - v_1 \otimes s  +v_2 \otimes q \right), \]
where the element $r\in\so_m$ is included naturally into $\glm$ and $ v_0 \otimes p$, $v_1 \otimes s$, $v_2 \otimes q$  is elements of $V_2 \ot W$.

The map $\al$ is not a Lie algebra homomorphism. The map 
\[ R_\al(x,y)=[\al(x),\al(y)]-\al([x,y]), \]
indicates how much the map $\al$ differs from a homomorphism of Lie algebras. Direct  computation shows that:
\begin{prop}\label{14}
The skew-symmetric tensor $R_\al$ takes nonzero values only on the following elements:
\begin{align*}
 &R(s_1,s_2)=s_1s_2^t-s_2s_1^t \in \glm ,\\
 &R(p_1,q_2)=-p_1q_2+q_2^tp_1^t+\left( \begin{array}{cc} -\frac{1}{2}q_2p_1 & 0 \\  0 & \frac{1}{2} q_2p_1\end{array} \right) \in \glm+\sl_2, \\
 &R(p_1,s_2)=\left( \begin{array}{cc} 0 & p_1^ts_2 \\  0 & 0\end{array} \right)\in\sl_2,\\
  &R(q_1,s_2)=\left( \begin{array}{cc} 0 & 0 \\  \frac{1}{2} q_1s_2 & 0\end{array} \right)\in\sl_2.
\end{align*}
\end{prop}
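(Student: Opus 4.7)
The plan is to compute $R_\al$ directly, reducing by bilinearity and skew-symmetry to the evaluation on pairs of block-type arguments from $\{h,x,y,p,q,s,r\}$. First I would make the underlying grading explicit: on $\so_{m+2,1}$, assign $h,r$ degree $0$, $y$ degree $+1$, $x,q$ degree $-1$, $s$ degree $-2$, and $p$ degree $-3$. The formula for $\al$ then sends each block type to the matching graded piece of $\g$, so $\al$ preserves the grading and $R_\al(X,Y)$ necessarily lies in $\g_{\deg X + \deg Y}$; this already restricts the possible components of the answer.

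Next I would dispose of all pairs in which at least one argument lies in $\gp_2$, i.e.\ is of type $h$, $y$, or $r$. The restriction $\al|_{\gp_2}$ is a Lie algebra homomorphism into $\g_0\oplus\g_1$, and $\al$ is equivariant with respect to the adjoint actions of $\gp_2$ on $\so_{m+2,1}$ and on $\g$. The coefficients $-\tfrac12$ and $\tfrac12$ appearing in front of $h$ and $y$ in the formula for $\al$ are precisely the normalizations enforcing this: once one computes the brackets of the embedded generators in \eqref{l1} and matches them against the abstract $\sl_2$-relations $[x,y]=h$, $[h,x]=2x$, $[h,y]=-2y$, exactly these scalars are forced. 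Checking the representative pairs $(h,p)$, $(y,p)$, $(r,s)$, $(x,y)$ then confirms that $R_\al$ vanishes whenever one argument lies in $\gp_2$.

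For pairs drawn from the negative complement $\{x,p,q,s\}$, the essential asymmetry is that $V_2\ot W$ is an abelian ideal in $\g$, while the corresponding subspace $\langle p,q,s\rangle$ of $\so_{m+2,1}$ has nontrivial brackets that leak into $\langle h,r\rangle\subset\g_0$, into the $\sl_2$-summand $\langle x\rangle\subset\g_{-1}$, and into $\langle y\rangle=\g_1$. The pairs $(x,x)$, $(p_1,p_2)$, $(q_1,q_2)$ vanish trivially, and the pairs $(x,p)$, $(x,q)$, $(x,s)$ reduce on both sides to the $\sl_2$-action of $x$ on $V_2\ot W$, giving $R_\al=0$ by equivariance. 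This leaves exactly the four cases of the proposition.

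For each of these, I would carry out a direct matrix computation in the block form \eqref{l1}. For example, $[S_1,S_2]$ produces only an $\so_m$-component $s_2 s_1^t-s_1 s_2^t$ in the $(3,3)$-block, while $[\al(s_1),\al(s_2)]=0$ by abelianness of $V_2\ot W$, immediately yielding the first formula. The main obstacle is the $(p,q)$ case, where $[P,Q]$ carries both a nonzero $h$-block (a scalar proportional to $qp$) and a nonzero $r$-block (the antisymmetric matrix $pq-q^t p^t$); the $\sl_2$-normalization of $\al$ on the $h$-component must be combined with the identity embedding on $r$ to recover the mixed $\gl_m+\sl_2$ expression. The remaining cases $(p,s)$ and $(q,s)$ collapse to a single scalar entry in the $x$- or $y$-block of $[P,S]$ or $[Q,S]$, producing the stated $\sl_2$-valued contributions.
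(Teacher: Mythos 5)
Your overall strategy---reduce by bilinearity and skew-symmetry to pairs of block types, dispose of the pairs meeting $\gp_2$ via the homomorphism/equivariance property of $\al|_{\gp_2}$, and compute the four remaining brackets in the block form \eqref{l1}---is exactly the ``direct computation'' the paper invokes without further detail, and your explicit evaluations of $[S_1,S_2]$, $[P,Q]$, $[P,S]$, $[Q,S]$ together with the abelianness of $V_2\ot W$ do reproduce the stated formulas.

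There is, however, one step that is genuinely wrong: the opening grading argument. The assignment $\deg h=\deg r=0$, $\deg y=1$, $\deg x=\deg q=-1$, $\deg s=-2$, $\deg p=-3$ does make $\al$ carry each block into the matching graded piece of $\g$, but it is \emph{not} a Lie algebra grading of $\so_{m+2,1}$: for instance $[p,q]$ lands in the $h$- and $r$-blocks (degree $0$, not $-4$) and $[q,s]$ lands in the $y$-block (degree $+1$, not $-3$). Hence only the first term $[\al(X),\al(Y)]$ of $R_\al(X,Y)$ is confined to $\g_{\deg X+\deg Y}$; the term $\al([X,Y])$ is not, and the claim ``$R_\al(X,Y)\in\g_{\deg X+\deg Y}$'' fails. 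Worse, since $\g_{-4}=\g_{-5}=0$, that claim would force $R(s_1,s_2)=R(p_1,q_2)=R(p_1,s_2)=0$ and would place $R(q_1,s_2)$ in $\g_{-3}$ rather than in $\sl_2$, i.e.\ it contradicts every one of the four formulas you are proving. Replace that remark by the correct observation that $p,q,s$ all map into the abelian ideal $V_2\ot W$, so for pairs drawn from these blocks $R_\al=-\al([X,Y])$ and the whole computation reduces to reading off $[X,Y]$ in \eqref{l1}. A smaller point: checking only the representative pairs $(h,p)$, $(y,p)$, $(r,s)$, $(x,y)$ does not establish vanishing of $R_\al$ on all of $\gp_2\times\so_{m+2,1}$; either run through the full (routine) list of pairs with one argument of type $h$, $y$ or $r$, or argue once that $\al$ intertwines the $\sl_2$- and $\so_m$-actions on both sides. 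With these repairs the proof is complete and coincides in substance with the paper's.
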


The restriction $\al|_{\gp_2}$ is a Lie algebra monomorphism from $\gp_2$ to $\gh$. This monomorphism defines an inclusion of Lie groups $i:P_2\to H$. We have all required information to define an extension geometry of type $(G,H)$ over the manifold $\wi M$.

Let us define an $H$-principal bundle $\wi \pi:\wi\PP\to \wi M$ by the formula $\wi\PP=\PP\times_i H$. The canonical inclusion $j\colon\PP\to \wi \PP$ defines a unique Cartan connection $\wi w=\ww_\al$ on $\wi \pi$ such that $j^*\wi \ww=\al\circ\ww$. In fact, $\wi w$ is
 an $H$-equivariant prolongation of the connection $\al\circ\ww\colon T\PP\to\g$.
The following commutative diagram describes a geometric picture that we have:
\[
\begin{CD}
\PP     @=   \PP     @>>>  \wi \PP   \\
@VVPV      @VVP_2V     @VVHV \\
M     @<<< \wi M   @= \wi M
\end{CD}
\]

Now we need to prove that the construction defined  above  describes an inclusion of 
the conformal geometry $\pi$ to the geometry of ODE system of the third order defined by conformal geodesics of $\pi$. To do that, first of all, we should check if this construction defines a Cartan connection adapted to some equation and then prove that conformal geodesics of our conformal structure are in one to one correspondence with the solutions of this equation.

A general form of a Cartan connection on a principal $ H$-bundle $\wi\pi$ is
\[ \wi\ww=\wi\ww_{-3}^i v_0\ot e_i+ \wi\ww_{-2}^i v_1\ot e_i+ \wi\ww_{-1}^i v_2\ot e_i+  \wi\ww_x x+ \wi\ww_h h+ \wi\ww_j^i e_i^j+\wi\ww_y y . \]
 We define on the manifold $\wi M$ a distribution $C$ with a splitting $ V\oplus E$ by conditions:
\begin{equation}\label{5}
\begin{aligned}
 &\langle s^*\wi\ww_{-3}^i,s^*\wi\ww_{-2}^i,s^*\wi\ww_{-1}^i\rangle^\bot = E , \\
&\langle s^*\wi\ww_{-3}^i,s^*\wi\ww_{-2}^i, s^*\wi\ww_{x} \rangle^\bot =  V,
\end{aligned}
\end{equation}
where $s^*$ is an arbitrary section of the bundle $\wi\pi$. This definition does not depend on the choice of the section $s$. Indeed, let $s'=s\cdot k$ be another section, where $k:\wi M\to H$. By the definition of a Cartan connection we have:
\[ s'^* \wi\ww = \operatorname{Ad}k^{-1}\left(s^* \wi\ww\right) + k^*\Theta_H. \]
Since the spaces 
\begin{align*}
&\left(v_0\ot W+v_1\ot W+v_2\ot W\right)^*,\\ 
&\left(v_0\ot W+v_1\ot W+\R x\right)^*
\end{align*} 
 are $H$-invariant, we have that: 
\[ \begin{aligned}
 &\langle s'^*\wi\ww_{-3}^i,s'^*\wi\ww_{-2}^i,s'^*\wi\ww_{-1}^i\rangle^\bot = E , \\
&\langle s'^*\wi\ww_{-3}^i,s'^*\wi\ww_{-2}^i, s'^*\wi\ww_{x} \rangle^\bot =  V.
\end{aligned} \]
 
 Every differential equation of a finite type is defined by a pair of distributions $(E,V)$.  This pair leads to a nilpotent differential geometry of type $(\gm, G_0)$, where $\gm_{-1}$ is a direct sum  $E\oplus V$ and $G_0$ 
is a subgroup of a group $ G_0(\gm)$ of all grading preserving automorphisms of $\gm$, which preserve splitting of $\gm_{-1}=E \oplus V$. Let $\g=\g(\gm, G_0)$  be a universal prolongation of a pair $(\gm, G_0)$.  Tanaka was the first, who  proved that for a semisimple $\g$ there exists a functor from Category of $G_0$-structures, associated with nilpotent geometries, to the category of a Cartan connection of type $(G, G_{\geq 0})$~\cite{tan70}. Later Morimoto generalized this statement  for a broader class of Lie algebras~\cite{mor93}, which the symbol of a finite type differential equation belongs to~\cite{dkm}. However, obtaining a nilpotent geometry from a Cartan connection of a type $(G,G_{\geq 0} )$  is much simpler task.

\begin{lem}
Let $\om$ be a Cartan connection of an infinitesimal type  $(\g,\g_{\geq 0} )$ over a principal bundle $\PP \rightarrow M$, where  $\g=\g(m, G_0)$ is a universal prolongation of a pair $(\gm,G_0)$. Assume, that a Cartan connection $\om$ has a curvature function of positive degree. If  $s: M \rightarrow \PP$ is an arbitrary section, then a pullback $s^*\om_-$ defines a nilpotent geometry of the type  $(\gm,G_0)$ and  this geometry does not depend on the choice of a section. 
\end{lem}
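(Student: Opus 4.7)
The plan is to use $\om$ to transport the grading of $\g$ onto $TM$, read off the symbol algebra from the structure equation, and check that the resulting $G_0$-structure is independent of the section.

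\emph{Step 1: filtration on $TM$.} For each $u\in\PP$ the Cartan form is an isomorphism $\om_u\colon T_u\PP\to\g$ sending the vertical subspace to $\g_{\ge 0}$. Setting $T^i_u\PP:=\om_u^{-1}\bigl(\bigoplus_{j\ge i}\g_j\bigr)$ defines a filtration of $T\PP$ which descends to a filtration of $TM$ because $\Ad(H)$ preserves the filtration of $\g$ (as $H=\exp\g_{\ge 0}$). Equivalently, for any section $s$ one has $T^iM=(s^*\om)^{-1}\bigl(\bigoplus_{j\ge i}\g_j\bigr)$, and this is independent of $s$ by the same invariance.

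\emph{Step 2: symbol algebra.} Fix a section $s$ and, for $A\in\gm$, let $\xi_A$ be the vector field on $M$ characterised by $s^*\om_-(\xi_A)=A$. The structure equation $d\om+\tfrac12[\om,\om]=\Om$, together with $s^*\om(\xi_A)=A$ being constant, yields
\[
s^*\om\bigl([\xi_A,\xi_B]\bigr)=[A,B]-s^*\Om(\xi_A,\xi_B).
\]
For $A\in\gm_i$ and $B\in\gm_j$ the positive-degree assumption on $\Om$ forces $s^*\Om(\xi_A,\xi_B)\in\bigoplus_{k\ge i+j+1}\g_k$. Projecting onto $\gm_{i+j}$ therefore leaves only $[A,B]_{\gm_{i+j}}$, so the Levi bracket on $\gr(TM)$ coincides with the bracket of $\gm$ under the identification induced by $s^*\om_-$; in particular the symbol at every point is isomorphic to $\gm$.

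\emph{Step 3: independence of $s$.} For a different section $s'=s\cdot h$ with $h\colon M\to H$ one has $s'^*\om=\Ad(h^{-1})\,s^*\om+h^*\Q_H$, where $\Q_H$ denotes the Maurer--Cartan form of $H$. The second summand is $\gh$-valued and so does not contribute to the $\gm$-component. The adjoint action of $H=\exp\g_{\ge 0}$ on $\gm\cong\g/\g_{\ge 0}$ preserves the grading filtration and descends to $G_0$ on the associated graded. Consequently $s^*\om_-$ and $s'^*\om_-$ define the same $G_0$-reduction of the frame bundle of $\gr(TM)$. The sole technical subtlety is the bookkeeping in Step~2: one has to see precisely that the curvature correction drops out of the $\gm_{i+j}$-component, and this is exactly where the hypothesis that $\Om$ has positive degree enters.
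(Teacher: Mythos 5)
Your argument is essentially the paper's: both read off the symbol algebra by applying the structure equation to the frame dual to $s^*\om_-$ and use the positive degree of the curvature to kill the correction term in the relevant graded component, and your Step 3 on section-independence is the standard equivariance argument the paper leaves implicit. One imprecision in Step 2: $\xi_A$ satisfies $s^*\om_-(\xi_A)=A$, not $s^*\om(\xi_A)=A$ (the $\g_{\geq 0}$-part of $s^*\om(\xi_A)$ is in general a nonconstant $\gh$-valued function), so your displayed identity is only valid modulo $\bigoplus_{k\geq i+j+1}\g_k$; the omitted terms, such as $[A,s^*\om_{\geq 0}(\xi_B)]$ and the derivatives of the $\g_{\geq 0}$-components, all have degree strictly greater than $i+j$ and hence also vanish after projecting to $\gm_{i+j}$ --- this is precisely the grading bookkeeping the paper makes explicit by noting that only $s^*\om_-$ has degree zero while $s^*\om_{\geq 0}$ has positive degree. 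With that correction folded into your acknowledged ``bookkeeping'' step, the proof is sound.
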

\begin{proof}
Consider a basis  $e^j_{-i}$, $i=1 \ldots n$, $j=1 \ldots n_i$ of a Lie algebra $\gm$, where the degree of $e^j_{-i}$ is $-i$. Then $s^*\om_{-}=\sum \om^j_{-i}e^j_{-i}$ and $\om^j_{-i}$ is a coframe on $M$. Let $X^j_{-i}$ be a dual frame.  Then we can obtain commutation relations between $X^j_{-i}$ from the  Cartan formula:

\begin{multline}\label{l3} \om^{j_1}_{-i_1} ([X^{j_2}_{-i_2},X^{j_3}_{-i_3}]) = -d\om^{j_1}_{-i_1} ([X^{j_2}_{-i_2},X^{j_3}_{-i_3}])+\\  X^{j_2}_{-i_2}(\om^{j_1}_{-i_1}(X^{j_3}_{-i_3})) +  X^{j_3}_{-i_3} (\om^{j_1}_{-i_1}(X^{j_2}_{-i_2})) = -d\om^{j_1}_{i_1} (X^{j_2}_{-i_2},X^{j_3}_{-i_3}) = \\(\frac{1}{2} [s^*\om, s^* \om] - s^* \Omega)^{j_1}_{-i_1}(X^{j_2}_{-i_2},X^{j_3}_{-i_3})
\end{multline}

We should check that \eqref{l3} is equal to structure constants of  $\gm$ whenever $i_1\geq i_2+ i_3$.
Note, that  $s^* \Omega$ does not affect the above property. This follows from the positivity of the curvature function.

Let's define a grading  on $X_{-i}^j$ by $\deg X^j_{-i}=-i$. $s^*\om$ is a non-negative function regarding this grading and the grading of $\g$.  Only $s^*\om_{-}$ part has a zero degree, while $s^*\om_{\geq 0}$ is always positive. It follows that \eqref{l3} is equal to structure constants of $\gm$ for $i_1 \geq i_2 + i_3$. This means that  $\gr X = \bigoplus_i X^*_{-i}/ X_{-i+1}$ is isomorphic to Lie algebra $\gm$.
\end{proof}

Now the existence of  the nilpotent geometry to which the Cartan connection is adapted and therefore the existence of an equation will follow from the positivity of the structure function. We are going to prove more strong fact: a normal conformal connection maps to a characteristic connection,  adapted to the third order ODEs system. Recall from \cite{medv2} the definition of characteristic Cartan connection.

The curvature of the Cartan connection $\wi\omega$ has the following form:
\[ \wi{\Omega}=\wi\Omega_{-3}^i v_0\otimes e_i+ \wi\Omega_{-2}^i v_1\otimes e_i+
\wi\Omega_{-1}^i
v_2\otimes e_i+  \wi\Omega_x x+ \wi\Omega_h h+ \wi\Omega_j^i e_i^j+\wi\Omega_y y .\]

Let $\Omega$ be one of the 2-forms $\wi\Omega_{-3}^i , \wi\Omega_{-2}^i ,
\wi\Omega_{-1}^i,
 \wi\Omega_x , \wi\Omega_h ,\wi\Omega_j^i,\wi\Omega_y$.
We can write it explicitly as: 
\[\Omega=\sum_{p,q=1}^3\Omega[\wi\omega_{-q}^j,\wi\omega_{-p}^k]\wi\omega_{-q}^j \wedge \wi{\omega}_{-p}^k+ \sum_{p=1}^3 \Omega[\wi\omega_x,\wi\omega_{-p}^k]\wi\omega_x \wedge \wi{\omega}_{-p}^k.\]
Then $\Omega[\wi\omega_{-q}^j,\wi\omega_{-p}^k]$ and $ \Omega[\wi\omega_x,\wi\omega_{-p}^k]$ are the coefficients of the structure function of the Cartan connection $\wi\omega.$
\begin{defi}
Cartan connection adapted to the equation 
\eqref{1} is called characteristic if the following conditions on a curvature is satisfied:
\begin{itemize}
 \item all coefficients  of degree $\le 1$ is equal to $0$;
 \item in degree $2$:  $\wi\Omega_h[\wi\omega_x\wedge \wi\omega_{-1}^i]=0$,  $\wi\Omega^i_j[\wi\omega_x\wedge
\wi\omega_{-1}^k]=0$,  $\wi\Omega_x[\wi\omega_x\wedge \wi\omega_{-2}^i]=0$, $\wi\Omega_{-1}^i [\wi\omega_x\wedge
\wi\omega_{-2}^j]=0$ and $\tr\left(\wi\Omega_{-2}^i [\wi\omega_x\wedge
\wi\omega_{-3}^j]\right)=0$;
 \item in degree $3$: $\wi\Omega_y[\wi\omega_x\wedge \wi\omega_{-1}^i]=0$, $\wi\Omega_h[\wi\omega_x\wedge
\wi\omega_{-2}^i]=0$, $\wi\Omega^i_j[\wi\omega_x\wedge\wi\omega_{-2}^k]=0$;
\item in degree $4$: $\wi\Omega_y[\wi\omega_x\wedge \overline\omega_{-2}^i]=0$.
\end{itemize}
\end{defi}

 It is quite rare when maps between Cartan geometries send a normal geometry to a normal one. Our construction also doesn't preserve normality. But we still have the following statement.

\begin{thm}\label{t1}
Assume that a conformal Cartan connection $\ww$ is normal. Then the extended Cartan connection $\wi\ww$ is characteristic.
\end{thm}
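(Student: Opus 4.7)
The strategy is to exploit the identity
\[
j^*\wi\Omega = \alpha\circ\Omega + R_\alpha(\omega,\omega)
\]
recorded just before the theorem. Because $\wi\omega$ and $\wi\Omega$ are $H$-equivariant, the structure function of $\wi\omega$ is determined by its values on the $j$-image $j(\PP)\subset\wi\PP$; each characteristic condition is a vanishing statement about such a component, so it suffices to pull back by $j$ and work on $\PP$.

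The first step is to read off the dictionary that $\alpha$ induces between the horizontal components of $j^*\wi\omega$ and the components of $\omega$ in the block form \eqref{l1}:
\[
j^*\wi\omega_x=\omega_x,\qquad j^*\wi\omega_{-3}^i=\omega_p^i,\qquad j^*\wi\omega_{-2}^i=-\omega_s^i,\qquad j^*\wi\omega_{-1}^i=\omega_q^i,
\]
where $\omega_p^i,\omega_q^i,\omega_s^i$ denote the components of $\omega$ along the matrix blocks $p$, $q$, $s$. Since $\Omega$ is horizontal as a conformal curvature, its only non-vanishing basis coefficients are on $\omega_x\wedge\omega_p^i$ and $\omega_p^i\wedge\omega_p^j$; via the dictionary $\alpha\circ\Omega$ produces coefficients of $j^*\wi\Omega$ only on $\wi\omega_x\wedge\wi\omega_{-3}^i$ and $\wi\omega_{-3}^i\wedge\wi\omega_{-3}^j$. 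On the other hand, direct inspection of Proposition~\ref{14} shows that $R_\alpha(\omega,\omega)$ produces basis coefficients only on $\wi\omega_{-2}^j\wedge\wi\omega_{-2}^k$, $\wi\omega_{-3}^j\wedge\wi\omega_{-1}^k$, $\wi\omega_{-3}^j\wedge\wi\omega_{-2}^k$ and $\wi\omega_{-1}^j\wedge\wi\omega_{-2}^k$. Crucially, neither term contributes on the bases $\wi\omega_x\wedge\wi\omega_{-1}^i$, $\wi\omega_x\wedge\wi\omega_{-2}^i$, or $\wi\omega_{-1}^j\wedge\wi\omega_{-1}^k$.

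This immediately settles every characteristic condition supported on one of the three empty bases: the two degree-$2$ conditions on $\wi\omega_x\wedge\wi\omega_{-1}^i$, the two degree-$2$ conditions on $\wi\omega_x\wedge\wi\omega_{-2}^i$, all three degree-$3$ conditions, and the degree-$4$ condition. The same bookkeeping handles the requirement that every coefficient of homogeneous degree $\le 1$ vanishes: coefficients on an empty basis vanish trivially, and the only coefficient of degree $\le 1$ that could survive on a non-empty basis is $\wi\Omega_{-3}^i$ on $\wi\omega_x\wedge\wi\omega_{-3}^j$ (homogeneous degree $1$). Normality of $\omega$ enters here: a normal conformal Cartan connection is torsion-free, i.e.\ has no components along $x$ or $p$ in \eqref{l1}, so $j^*\wi\Omega_{-3}^i=\alpha(\Omega_p^i)$ vanishes identically.

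The only condition still to check is the degree-$2$ trace relation $\tr\bigl(\wi\Omega_{-2}^i[\wi\omega_x\wedge\wi\omega_{-3}^j]\bigr)=0$. Under the dictionary it becomes $\sum_i\Omega_s^i[\omega_x\wedge\omega_p^i]=0$, which is the trace-freeness of the $s$-component of the conformal curvature, a Kostant-type normalisation applied to the degree-$1$ (Cotton--York-like) piece of $\Omega$ that is standard for normal conformal connections. This completes the verification. The main obstacle is therefore only the dictionary and degree bookkeeping in the second and third paragraphs; after that the theorem reduces to the two standard facts about a normal conformal curvature, namely torsion-freeness and the relevant trace-freeness of its Cotton--York component.
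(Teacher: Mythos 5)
Your proposal is correct and follows essentially the same route as the paper: pull back by $j$, split $j^*\wi\Omega=\al\circ\Omega+R_\al$, observe that horizontality confines $\al\circ\Omega$ to the $\wi\omega_x\we\wi\omega_{-3}^i$ and $\wi\omega_{-3}^i\we\wi\omega_{-3}^j$ slots while $R_\al$ lives in degree $4$, and reduce everything to torsion-freeness plus the Ricci-kernel condition for the remaining trace. Two cosmetic remarks: the $s$-block you invoke for the trace condition is the degree-zero (Weyl) part of the conformal curvature, not the Cotton--York piece, though the needed trace-freeness does follow from $k_{s,r}\in\ker(\mathrm{Ric})$ exactly as you use it; and your reduction to $j(\PP)$ implicitly uses that the characteristic conditions cut out an $H$-invariant submodule, a point the paper states explicitly.
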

\begin{proof}
To prove the theorem we  analyze the curvature function of the Cartan connection $\wi \ww$. Let $k$ be a curvature function of the conformal connection $\ww$. Recall that a conformal connection is normal if $k_p=0$, $k_x=0$, $k_z=0$ and $k_{s,r}$ component lies in the kernel of the Ricci homomorphism. 

A curvature function $\wi k$ is an $H$-equivariant prolongation of
\[j^*\wi k = \al\circ k + R_\al.\]
The curvature function $\wi k$ is characteristic iff $\wi k$ take values in an $H$-invariant characteristic submodule $U$. Therefore, if 
$\al\circ k + R_\al$
take values in $U$ then $\wi k$ is characteristic.

 Note, that the tensor $R_\al$ is of the degree 4 and takes values in $I_4$ part of the curvature function. Moreover, the summand $\al\circ k$ doesn't contribute into $I_4$ part of the curvature function. That's why $I_4$ is equal to an $H$-equivariant prolongation of $R_\al$. From \cite{medv1} we know that the invariant $I_4$ takes values in $S^2(W)$. It can be checked directly the value of $R_\al$ on $\PP$ is equal to the bilinear form $E$, which has an identity matrix in the basis $(e_1,\dots,e_n)$.

Since on $s$ and $q$ parts of the Lie algebra $\som$ structure function of the connection $\ww$ are zero, all coefficients of the curvature function $\wi k $ except $\wi\Om^i[\wi\ww_x \we\wi\ww_{-3}^j]$ and $\wi\Om^i[\wi\ww_{-3}^k \we\wi\ww_{-3}^j]$ are zero. The form $\wi\Om^i$ here is one of the 2-forms $\wi\Omega_{-3}^i ,$ $\wi\Omega_{-2}^i ,$ $
\wi\Omega_{-1}^i,$ $
 \wi\Omega_x ,$ $ \wi\Omega_h ,$ $\wi\Omega_j^i.$ The lowest possible degree of $\wi\Om^i[\wi\ww_x \we\wi\ww_{-3}^j]$ and $\wi\Om^i[\wi\ww_{-3}^k \we\wi\ww_{-3}^j]$ is 2. The only such components of the degree 2 or 3 are $\wi\Omega_{-2}^i[\wi\ww_x \we\wi\ww_{-3}^j]$ and $\wi\Omega_{-1}^i[\wi\ww_x \we\wi\ww_{-3}^j]$. A curvature function with such components in degree 2 and 3 belongs to the characteristic module $U$ if the trace of $\wi\Omega_{-2}^i[\wi\ww_x \we\wi\ww_{-3}^j]$ is 0. In degree 4 characteristic connection has only one condition $\wi\Om_y [\wi\ww_x \we\wi\ww_{-2}^j]=0$, which is obviously satisfied for extended Cartan connection. 
 
The last thing that we should check is that the trace of component of the curvature which corresponds to Wilczynski invariant of degree 2 is $0$. We should use the fact that  $k_{s,r}$ component of the curvature of $\ww$ lies in the kernel of the Ricci homomorphism. The basic fact about Ricci homomorphism homomorphism is that elements
\[ b_{ij}=\sum_{k=1}^n \left( e^*_i \we e^*_k \ot e_k^j +e^*_j \we e^*_k \ot e_k^i\right)\]
are linearly independent and Ricci homomorphism induces an isomorphism from $B=\operatorname{span}(b_{ij})$ to $S^2(W_{n+1}^*)$. Under our extension construction an element $\frac{1}{2}b_{00}$ goes directly to the the trace of second Wilczynski invariant which will be zero for Ricci-flat connections.
\end{proof}
\begin{cor}
Invariant $I_2$ is equal to zero for conformal geodesics equations. Invariant $I_4$ is  a non-degenerate everywhere bilinear form for them.
\end{cor}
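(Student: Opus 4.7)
The plan is to exploit Theorem \ref{t1}, which tells us that the extended connection $\wi\ww$ is the characteristic Cartan connection of the associated third order ODE system, and then to read off the invariants $I_2$ and $I_4$ from its curvature function $\wi k = \al\circ k + R_\al$ (after $H$-equivariant prolongation).

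First I would locate the invariant $I_2$ inside the curvature. Recall that $I_2$ is the symmetric, trace-free part of a specific low-degree component of the curvature of the characteristic Cartan connection (the part homogeneous in the $V$-direction, i.e.\ the coefficient of $\wi\ww_{-3}^j\we\wi\ww_{-3}^k$ in the degree $-2$ slot of $\wi{\Omega}$). By the proof of Theorem \ref{t1}, the only curvature components of $\wi\ww$ that can be nonzero are those of the form $\wi\Om^i[\wi\ww_x\we\wi\ww_{-3}^j]$ and $\wi\Om^i[\wi\ww_{-3}^k\we\wi\ww_{-3}^j]$, and among these only degrees $\ge 2$ appear. Inspecting Proposition \ref{14} one sees that $R_\al$ contributes nothing to the $V\we V$ piece of the $\gm_{-2}$-component (that piece requires $[s_1,s_2]$-type input, which lives at a different bracket). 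Similarly, the conformal part $\al\circ k$ has no contribution in this slot because the normal conformal curvature has no components in the relevant directions ($\som$ brackets among the $s$-entries go into the $\gl_m$ block, which does not produce an $I_2$ contribution). Hence the full $I_2$ component of $\wi k$ vanishes identically on $\PP$, and by $H$-equivariant prolongation it vanishes everywhere.

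Next I would read off $I_4$. The proof of Theorem \ref{t1} already isolates $I_4$ as the $H$-equivariant prolongation of the degree $4$ piece of $\wi k$, and this degree $4$ piece is precisely the contribution of $R_\al$; in fact the proof notes that on $\PP$ the value of $I_4$ is the bilinear form $E$ whose matrix in the basis $(e_1,\dots,e_m)$ is the identity. Non-degeneracy everywhere then follows because the prolongation is by $H$-equivariance: any fibre value is related to $E$ by an element of $H$ acting linearly and invertibly on $S^2(W)$, which preserves non-degeneracy.

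The main obstacle is bookkeeping: pinning down exactly which $\mathrm{Hom}(\Lambda^2\gm,\g)$-component of the characteristic curvature encodes $I_2$, and verifying that both summands $\al\circ k$ and $R_\al$ are zero there. Once this identification is made, the rest is direct: $I_2$ vanishes from the vanishing of the relevant conformal curvature components (normality plus the $k_{s,r}$ component lying in the kernel of the Ricci map) together with the fact that $R_\al$ in Proposition \ref{14} has no $I_2$-type contribution, and $I_4$ is non-degenerate because it is manifestly the identity form on $\PP$.
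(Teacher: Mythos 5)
Your overall strategy is the paper's: read $I_2$ and $I_4$ off the curvature $j^*\wi k=\al\circ k+R_\al$ of the extended connection, reusing the analysis from the proof of Theorem \ref{t1}. The $I_4$ half is correct and coincides with the paper's argument: $I_4$ is the $H$-equivariant prolongation of $R_\al$, its value on $\PP$ is the identity form $E\in S^2(W^*)$, and since $H$ acts by invertible linear maps, non-degeneracy propagates to all of $\wi\PP$.

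The $I_2$ half contains a genuine error: you place $I_2$ in the wrong curvature slot. $I_2$ is the degree-$2$ invariant and sits in the coefficient $\wi\Om^i_{-2}[\wi\ww^k_{-1}\we\wi\ww^j_{-3}]$ (homogeneity $1+3-2=2$), not in $\wi\Om^i_{-2}[\wi\ww^j_{-3}\we\wi\ww^k_{-3}]$, which has homogeneity $3+3-2=4$ and, being the coefficient of a wedge, only sees antisymmetric data, whereas $I_2$ is symmetric in $j,k$. Moreover, the vanishing you assert for your slot is false: under $\al$ the $s$-block of $\som$ is sent to $v_1\ot W$, so the $s$-component of the Weyl part $k_{s,r}$ of the normal conformal curvature, evaluated on $p\we p$, lands exactly in $\wi\Om^i_{-2}[\wi\ww^j_{-3}\we\wi\ww^k_{-3}]$; this is one of the two components that the proof of Theorem \ref{t1} explicitly allows to be nonzero, and it is generically nonzero for conformal geodesics equations. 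The repair is the paper's actual argument: $\al\circ k$ is a $2$-form on $\al(\g/\gp)=\langle x\rangle\oplus v_0\ot W$ and hence only populates the $[\wi\ww_x\we\wi\ww_{-3}]$ and $[\wi\ww_{-3}\we\wi\ww_{-3}]$ slots, while the only $[\wi\ww_{-1}\we\wi\ww_{-3}]$ contribution of $R_\al$, namely $R(p_1,q_2)$, takes values in $\glm+\sl_2$ rather than in $v_1\ot W$; therefore the degree-$2$ coefficient $\wi\Om^i_{-2}[\wi\ww^k_{-1}\we\wi\ww^j_{-3}]$, and with it $I_2$, vanishes.
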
 
\begin{proof}
The invariant $I_2$ appears in the coefficient $\wi\Om^i_{-2}[\wi\ww^k_{-1}\we\wi\ww^j_{-3}]$, which is equal to zero for equations on conformal geodesics. The second statement follows from the proof of the previous theorem.
\end{proof}

\begin{thm}
Let us consider a conformal geometry with a normal Cartan connection $\ww:\PP\to\so_{m+1,1}$. Then the extended connection $\wi\ww:\PP\times_i H\to\g$ is adapted to the ODEs system on conformal geodesics of  $\ww$. 
\end{thm}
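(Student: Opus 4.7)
The plan is to combine Theorem~\ref{t1} with the preceding Lemma to establish the existence of an adapted ODE system on $M$, then use naturality of the construction to identify it with the conformal geodesic equation.

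By Theorem~\ref{t1} the extended connection $\wi\ww$ is characteristic; in particular its curvature function has positive degree. Since $\g$ is the universal prolongation of the symbol $(\gm,G_0)$ of 3rd order ODE systems, the Lemma applies to $\wi\ww$: for any local section $s$ of $\wi\pi$ the pullback $s^*\wi\ww_-$ is a coframe on $\wi M$ defining a nilpotent geometry of type $(\gm,G_0)$, independent of $s$. Equivalently, the distributions $V$ and $E$ defined by~(\ref{5}), together with the filtration $C\subset C^{-2}\subset T\wi M$, form the canonical pair of distributions of some third order ODE system, and $\wi\ww$ is an adapted Cartan connection for it. Inspection of the block structure in~(\ref{l1}) shows that $\gp/\gp_2$ is precisely the $q$-block, and the formula for $\al$ maps this block isomorphically onto $v_2\ot W\subset\g_{-1}$; hence $V$ coincides with the kernel of the differential of the natural projection $\wi M\to M$, so the leaf space of $V$ is locally $M$ and the solutions of the resulting ODE system are curves on $M$.

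It remains to identify these solutions with the conformal geodesics of $\ww$. By naturality of both the correspondence space construction and the extension functor, the resulting ODE system on $M$ agrees pointwise with the one obtained from the flat Maurer--Cartan form on $SO_{m+2,1}$. In the flat case, Proposition~1 identifies $\wi M$ with the second jet space of unparameterized curves in Mobius space $N$, and the distribution $E$ at each point is spanned by $\al(x)$, where $x\in\som$ generates the one-parameter subgroup whose orbit through the base point of $N$ is precisely a circle; consequently the integral curves of $E$ on $\wi M$ are the 2-jet lifts of circles in $N$, i.e.\ of conformal geodesics for the flat conformal structure. In the curved case the same identification transfers through the relation $j^*\wi\ww=\al\circ\ww$: lifting an integral curve of $E$ horizontally to $\wi\PP$ yields a curve whose $\wi\ww$-image lies in $\R\cdot x\subset\g_{-1}$, which under $\al^{-1}$ encodes development along $x\in\som$, whose model integral curve in $N$ is a circle. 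Hence the projection to $M$ is a conformal geodesic of $\ww$.

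The main obstacle is the last identification: tracking the relation between horizontal lifts for $\wi\ww$ on $\wi M$ and development via $\ww$ on $M$ through the non-homomorphism $\al$. This is made routine by $j^*\wi\ww=\al\circ\ww$ together with the observation from Proposition~\ref{14} that the defect $R_\al$ is nonzero only on pairs involving $p$, $q$, or $s$ components of $\som$, none of which appear in the $x$-direction generating $E$; so no correction terms from $R_\al$ interfere with the computation along integral curves of $E$.
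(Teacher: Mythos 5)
Your first step (the Lemma plus Theorem~\ref{t1} to get adaptedness to \emph{some} third order system, and the identification of $V$ with the fibres of $\wi M\to M$) matches the paper. The identification of the solutions with conformal geodesics, however, contains a genuine gap. The claim that ``by naturality the resulting ODE system on $M$ agrees pointwise with the one obtained from the flat Maurer--Cartan form'' is false: the conformal geodesic equations of a curved structure involve the curvature (Schouten tensor) of $\ww$ and do not coincide with the flat equations; only the symbol $(\gm,G_0)$ agrees pointwise. So the detour through the flat model establishes nothing about which curves on $M$ are solutions. In addition, your transfer argument runs in only one direction (an integral curve of $E$, lifted so that $\wi\ww$ takes values in $\R x$, develops into a circle); you never show that every conformal geodesic of $\ww$ lifts to an integral curve of $E$, which is half of the statement.

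The argument that actually closes this --- and it is the one the paper gives, the seed of which is already in your last paragraph --- is a direct chain of equivalences with no reference to the flat model. A curve $\gamma$ in $M$ is a conformal geodesic iff its development is a circle, iff there is a local section $s$ along which $s^*\ww|_\gamma$ takes values in the symmetry algebra $\gs$ of a circle, i.e.\ iff $s^*\ww_p=s^*\ww_q=s^*\ww_s=0$ along $\gamma$. Since $\al$ carries the $p$, $s$, $q$ blocks isomorphically onto $v_0\ot W$, $v_1\ot W$, $v_2\ot W$ and $j^*\wi\ww=\al\circ\ww$ holds exactly (the defect $R_\al$ only enters the curvature, so your worry about correction terms is moot), this is equivalent to $\wi\ww_{-3}=\wi\ww_{-2}=\wi\ww_{-1}=0$ along the lifted curve, which by the adapted coframe description is precisely the condition that the corresponding curve in $\wi M$ be a solution of the equation. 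This gives both implications simultaneously and replaces the unjustified naturality step.
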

\begin{proof}

A symmetry algebra $\gs$ of a circle in $m+1$ dimensional euclidean space is $\sl_2\times\so_{m-1}$. One can realize this algebra as subalgebra of $\so_{m+1,1}$: 
\[ {\mathfrak s} = \left(
\begin{array}{rrrrr}
\wi{h} & \wi{y} & 0 & 0 \\
\wi{x} & 0 & 0 & \wi{y} \\
0 & 0 & r & 0 \\
0 & \wi{x} & 0 & -\wi{h}
\end{array}
\right) . \]
Curve $\wi\gamma$ is a circle if and only if there exists such a section $s\colon SO_{m+1,1}/P \to SO_{m+1,1}$ that $s^*\om_G|_\gamma$ takes values in $\gs$, where $\om_G$ is Maurer-Cartan form.
By the definition of the development the curve $\gamma$ on the manifold $M$ is conformal geodesic iff there exists at every point a local section $s$ such that
$s^*\om|_\gamma$ takes values in $\gs$, where $\om$ is a normal conformal Cartan connection.

 The property  $s^*\om|_\gamma\in\gs$ is equal to the property 
\[
s^*\om_{p}|_\gamma=s^*\om_{q}|_\gamma=s^*\om_{s}|_\gamma=0.
\]
 
 On the other hand the curve $\gamma$ on the manifold $\wi M$ is a solution of the equation \eqref{1} iff
$\wi\om_{-1}$, $ \wi \om_{-2}$, $\wi\om_{-3}$ are equal to zero on the curve.

Indeed, let $\oo$ be the following co-frame on $\E$
\begin{align*}
\oo_x&= d x \\
\oo_{-3}^i&=d y^i-p^i d x, \\
\oo_{-2}^i&=d p^i-q^i d x ,\\
\oo_{-1}^i&=d q^i-f^i d x.
\end{align*}
The coframe $\wi\om_{-1}$, $ \wi \om_{-2}$, $\wi\om_{-3}, \wi\om_x$ is adapted to the equation \eqref{1} if and only if
\begin{align*}
 &\langle \wi\ww^i_{-3} \rangle = \langle \oo^i_{-3} \rangle \\
 &\langle \wi\ww^i_{-3},\wi\ww^i_{-2} \rangle = \langle \oo^i_{-3},\oo^i_{-2} \rangle \\
&\langle \wi\ww^i_{-3},\wi\ww^i_{-2},\wi\ww^i_{-1} \rangle = \langle \oo^i_{-3},\oo^i_{-2},\oo^i_{-1} \rangle \\
&\langle\wi \ww^i_{-3},\wi\ww^i_{-2},\wi\ww_{x} \rangle =\langle \oo^i_{-3},\oo^i_{-2},\oo_{x} \rangle
\end{align*}
We see that a curve $\gamma$ on the manifold $\wi M$ is a solution of the equation \eqref{1} iff
$\oo_{-1}$, $ \oo_{-2}$, $\oo_{-3}$ are equal to zero on the curve $\gamma$. This is equivalent to the fact that $\wi\om_{-1}$, $ \wi \om_{-2}$, $\wi\om_{-3}$ are equal to zero on the curve $\gamma$.
\end{proof}

\section {Necessary and sufficient conditions on conformal geodesics equations}

In this section we obtain the necessary and sufficient conditions for the 3rd order ODEs systems, which define conformal geodesics. Since we know how to construct a Cartan connection of conformal geodesics ODEs system from conformal connection, we revert this process and obtain conditions on a curvature, which can be formulated in terms of fundamental invariants.

From the proof of the Theorem \ref{t1} we know that  for conformal equations the value of the invariant $I_4$ over the bundle $\PP$ is an identity bilinear form $E\in S^2(W^*)$. The group $i(P_2)$ acts trivially on $I_4=E$. The bundle $\PP$ is a maximal subbundle of $\wi\PP$ on which the invariant $I_4$ is identity.

 We define the reduction of the bundle $\tilde{\PP}$ on which $I_4$ is equal to $E$ as $\tilde{\PP}_E$. This is a reduction to the group $P_2.$ Let $\ww_E$ be a restriction of the form $\tilde{\ww}$ to the subbundle $\PP_E$. In  order to make the form $\ww_E$ to define a geometry of the type  $(SO_{m+2,1},P_2)$  we need form $\ww_E$ to take values in $\im_\al(\so_{m+2,1}).$ This condition can be formulated in terms of differential relations between fundamental invariants.

Recall, that a universal covariant derivative of a function $f\colon \PP \to \overline{V} $  is a function $Df\colon \PP \to  \overline{V} \otimes \g^*$. Since $H$-module $\g^*$ decomposes as $\glm^* \oplus \sl^*_2 \oplus V^*$ we have a decomposition of $D$ to $D_{\glm} + D_{\sl_2} + D_V$.

\begin{prop}
The connection $\tilde{w}_E$ takes values in the $\im_\al(\so_{m+1,1})$ iff the covariant derivatives  $D_{\sl_2} I_4$ and $D_V I_4$ are equal to $0$.
\end{prop}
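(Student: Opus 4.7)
The plan is to translate the condition that $\ww_E$ take values in $\al(\so_{m+2,1})$ into a pointwise vanishing condition on the universal covariant derivative of $I_4$, and then to compare the resulting requirements with $D_{\sl_2}I_4=0$ and $D_V I_4=0$. I would first read off $\al(\so_{m+2,1})\subset\g$ from the defining formula for $\al$: as a vector subspace of $\g=\sl_2\oplus\gl_m\oplus(V_2\ot W)$, it equals $\sl_2\oplus\so_m\oplus(V_2\ot W)$, where $\so_m\subset\gl_m$ is the skew-symmetric subalgebra. Its complement in $\g$ is therefore the symmetric part of $\gl_m$. A dimension count gives $\dim\al(\so_{m+2,1})=\dim\PP_E$, so via the Cartan connection isomorphism $\wi\ww$ it suffices to verify the pointwise inclusion $\wi\ww(T_u\PP_E)\subseteq\al(\so_{m+2,1})$ on $\PP_E$.

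Next I would identify $T_u\PP_E$ through the universal covariant derivative. Since $\PP_E\subset\wi\PP$ is cut out as the level set $\{I_4=E\}$, a tangent vector $\xi$ at $u$ lies in $T_u\PP_E$ if and only if $\xi(I_4)=0$. Writing $\xi$ as the value at $u$ of the constant vector field $\xi_X$ determined by $\wi\ww(\xi_X)=X$, this condition becomes $D_X I_4(u)=0$ by the very definition of the universal covariant derivative. Hence $\wi\ww(T_u\PP_E)=\ker DI_4(u)\subset\g$, and the proposition reduces to the requirement that $\al(\so_{m+2,1})\subseteq\ker DI_4$ pointwise on $\PP_E$.

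Finally, splitting $\al(\so_{m+2,1})=\sl_2\oplus\so_m\oplus(V_2\ot W)$ and matching the summands against the decomposition $D=D_{\sl_2}+D_{\gl_m}+D_V$ breaks this inclusion into three vanishing conditions: $D_{\sl_2}I_4=0$, $D_{\so_m}I_4=0$, and $D_V I_4=0$. The middle one is automatic: $\so_m\subset\gh$ is vertical, and for $A\in\gl_m$ the value $D_A I_4$ on $\PP_E$ equals the infinitesimal action $-A\cdot E$ on the identity bilinear form; the identity $(A\cdot E)(v,w)=-E(Av,w)-E(v,Aw)=-v^t(A+A^t)w$ shows that this vanishes exactly on $\so_m$. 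What survives is precisely $D_{\sl_2}I_4=0$ and $D_V I_4=0$, as claimed. I expect the main subtle point to be the algebraic bookkeeping of the first step, namely verifying that the image of $\al$ in $\gl_m$ is exactly $\so_m$ so that the complement of $\al(\so_{m+2,1})$ in $\g$ lives entirely in the symmetric part of $\gl_m$ and does not leak into the $\sl_2$, $V$, or higher $\gh$-slots; once this is pinned down, the identification $T_u\PP_E\leftrightarrow\ker DI_4(u)$ via $\wi\ww$ and the stabilizer computation for $E$ become routine.
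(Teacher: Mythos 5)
Your argument is correct and is essentially the paper's own proof repackaged: both rest on the expansion $dI_4=\sum D_{X_i}I_4\,\tilde\ww^i$, the computation that $A\in\glm$ acts on the identity form $E$ by $-(A+A^t)$ (so the stabilizer is $\so_m$ and the symmetric part acts injectively, which is also what makes $E$ a regular value and justifies your identification $T_u\PP_E=\ker dI_4(u)$), and the dimension count $\dim\PP_E=\dim\so_{m+2,1}$. The paper runs the two implications separately via the coframe property of $j^*\tilde\ww^i$ on $\PP_E$, but the content is the same; the only point you leave implicit is that vanishing of $D_{\sl_2}I_4$ and $D_VI_4$ on $\PP_E$ extends to all of $\wi\PP$ by $H$-equivariance of these components.
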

\begin{proof}
Let $X_i$ be a basis of $\g$. Then the Cartan connection  $\tilde{w}$ has a form $\sum X_i \tilde{\ww}^i$ in this basis. According to the definition of universal covariant derivative
\begin{equation}\label{l11}
d I_4 = \sum D_{X_i} I_4 \tilde{\ww}^i.
\end{equation}

In our case $I_4$ equals to $E$ on $\PP$. Let $j$ be an inclusion $j: \tilde{\PP}_E \hookrightarrow \tilde{\PP}.$ Assume now that $\tilde{w}_E=j^*\tilde\ww \in \Hom (T\PP, \so_{m+2,1})$. Then from \eqref{l11} we get
\begin{equation} \label{l12}
0=j^*dI_4 = \sum_{X_i \in \g} j^*(D_{X_i} I_4) j^*\tilde{\ww}^i=\sum_{X_i \in \so_{m+2,1}} j^*(D_{X_i} I_4) j^* \tilde{\ww}^i
\end{equation}

We claim, that $j^*\tilde{\ww}^i$ form a coframe on $\PP_E$ for $i$ such that $X_i\in\so_{m+2,1}$. To show this, assume that $j^*\tilde{\ww}^i$ do not form a coframe. Since $\dim T\PP_E=\dim \so_{m+2,1}$ there exists a tangent vector $v$ at some point $p$ of ${\PP_E}$ such that $j^*\tilde{\ww}(v)=0.$ But  $\tilde{\ww}$ is an $H$-equivariant prolongation of $j^*\tilde{\ww}$ therefore $\tilde{\ww}(v)=\Ad_H\circ j^*\tilde{\ww}(v)=0$. This contradicts with the fact that $\tilde{\ww}$ is a Cartan connection.

Now from the equation \eqref{l12} it follows that $j^*(D_{X_i} I_4)=0$ for $X_i\in\im_\al(\so_{m+1,1}).$ Since $D_{\sl_2} I_4$ and $D_{V} I_4$ are $H$-equivariant prolongations of   $j^*(D_{\sl_2} I_4)$  and $j^*(D_{V} I_4)$ we get that  $D_{\sl_2} I_4$ and  $D_{V} I_4$ are equal to $0$.

And vice versa, if covariant derivatives  $D_{\sl_2} I_4$ and $D_V I_4$ are equal to zero then the connection $\tilde{w}_E$ takes values in the $\im_\al(\so_{m+1,1})$. The covariant derivative $D_{X_i}(E)$ is equal to $-2X_i$ for the operators with symmetric matrix $X_i \in \glm$. Then using \eqref{l11} we get:
 \begin{multline} \label{l13}
0=j^*dI_4 = \sum_{X_i \in \g} D_{X_i} j^*I_4 j^*\tilde{\ww}^i=\\\sum_{X_i \in \Sym\glm} (D_{X_i} I_4)j^* \tilde{\ww}^i=
\sum_{X_i \in \Sym\glm} (-2{X_i} I_4)j^* \tilde{\ww}^i.
\end{multline}
Since basis operators $X_i\in\Sym\glm$ are linearly independent, all $j^* \tilde{\ww}^i$ should  be equal to zero for such $i$. This means that $\ww_E=j^*\tilde\ww \in \Hom (T\PP, \so_{m+2,1})$.
\end{proof}

\begin{thm}\label{t3}
The 3rd order ODEs system determines locally conformal geodesics of some conformal geometry iff the following conditions are satisfied:
\begin{enumerate}
\item\label{t3l1} Invariant $I_2$ equal to zero;
\item\label{t3l2} Invariant $I_4$ has the maximal rank and $D_{\sl_2}(I_4)=0$, $D_{V_2\ot W}(I_4)=0$;
\item\label{t3l3} $i_{\zeta}(k - I_4)=0$ for $\zeta \in v_1\ot W+v_2\ot W$,
\end{enumerate}
where $k$ is a structure function of the adapted to the equation Cartan connection.
\end{thm}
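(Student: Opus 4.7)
The proof splits naturally into necessity and sufficiency, both of which rely on the extension/correspondence machinery already set up.

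For necessity, assume \eqref{1} is the conformal geodesic system of a normal conformal geometry $(\PP,\omega)$. Then $\tilde\omega = \omega_\alpha$ by construction, and the Corollary to Theorem \ref{t1} gives $I_2=0$ as well as non-degeneracy of $I_4$, so (1) and the rank part of (2) hold. For the differential conditions in (2), the reduction $\PP_E$ coincides with the canonical image of $\PP$ inside $\tilde\PP$, on which $\tilde\omega$ already takes values in $\alpha(\so_{m+2,1})$; the Proposition immediately preceding this theorem then forces $D_{\sl_2}I_4 = 0$ and $D_{V_2\otimes W}I_4 = 0$. For (3), apply the curvature-extension identity $j^*\tilde k = \alpha\circ k + R_\alpha$ from the proof of Theorem \ref{t1}: the term $R_\alpha$ contributes precisely the $I_4$ block (this was the key step in that proof), while $\alpha\circ k$ has no components in the $v_1\otimes W + v_2\otimes W$ directions because the normal conformal curvature $k$ has none there.

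For sufficiency, start with a system satisfying (1)--(3) and form its characteristic Cartan geometry $(\tilde\PP\to\E,\tilde\omega)$ from \cite{medv2}. Condition (2) and the Proposition reduce $\tilde\PP$ to the principal $P_2$-subbundle $\PP_E\subset\tilde\PP$ where $I_4 = E$, and the restricted form $\omega_E := j^*\tilde\omega$ lands in $\im_\alpha(\so_{m+2,1})$; transport along $\alpha^{-1}$ promotes $(\PP_E\to\E,\omega_E)$ to a Cartan geometry of type $(SO_{m+2,1},P_2)$. The strategy now is to invert the correspondence-space construction: locally the $P/P_2$-foliation of $\PP_E$ is simple, the leaf space $M$ acquires a smooth structure with $\PP_E\to M$ principal $P$-bundle, and $\omega_E$ becomes a Cartan connection of type $(SO_{m+2,1},P)$, i.e., a conformal connection on $M$. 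Condition (3) is what makes this conformal connection normal: writing $\alpha\circ k_M = \tilde k - R_\alpha$ and noting that the $R_\alpha$ contribution is exactly the $I_4$ component, the hypothesis $i_\zeta(k - I_4) = 0$ on $\zeta\in v_1\otimes W + v_2\otimes W$ supplies precisely the vanishings $k_p = k_x = k_z = 0$ required by normality in the proof of Theorem \ref{t1}. By the uniqueness of the normal conformal Cartan connection, $(\PP_E\to M,\omega_E)$ is the normal conformal geometry whose extension recovers $(\tilde\PP,\tilde\omega)$, and the theorem immediately preceding \ref{t3} identifies the integral curves of $E$ on $\E$ with its conformal geodesics.

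The main obstacle is the reverse correspondence step: one must verify that the $P/P_2$-foliation of $\PP_E$ is locally simple and, more delicately, that the conformal connection produced on $M$ is in fact normal, rather than merely an arbitrary conformal Cartan connection. The former follows from $\omega_E$ taking values in $\so_{m+2,1}$ (which trivialises the relevant vertical distribution), but the latter is genuinely delicate: it is the combined content of condition (3) and the structural analysis of the degree-$2$ and degree-$3$ curvature components carried out in the proof of Theorem \ref{t1}, which together ensure that subtracting $R_\alpha$ from $\tilde k$ yields a curvature tensor matching the Ricci-kernel/normality profile of a normal conformal connection.
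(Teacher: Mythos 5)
Your overall architecture --- reduce to the subbundle $\PP_E$ where $I_4=E$ using conditions (1)--(2), then invert the correspondence-space construction --- is the same as the paper's, and your necessity half is essentially right. But in the sufficiency half you have misassigned the logical role of condition (3), and that misassignment is a genuine gap. You claim that the local descent of $(\PP_E,\ww_E)$ from type $(SO_{m+2,1},P_2)$ to type $(SO_{m+2,1},P)$ is automatic ``because $\ww_E$ takes values in $\so_{m+2,1}$,'' and that condition (3) is instead what makes the resulting conformal connection \emph{normal}. Both halves of this are wrong. Taking values in $\so_{m+2,1}$ only makes $(\PP_E,\ww_E)$ a Cartan geometry of type $(SO_{m+2,1},P_2)$; it says nothing about whether that geometry is locally a correspondence space. \v Cap's theorem (the result the paper actually invokes here) says that a geometry of type $(G,P_2)$ descends locally to one of type $(G,P)$ iff $i_\zeta k_E=0$ for all $\zeta\in\gp$, and this is exactly where condition (3) enters: since $\gp$ is spanned by $\gp_2$ together with the $s$- and $q$-blocks, and $\al$ sends $s,q$ into $v_1\ot W+v_2\ot W$, the identity $j^*\wi k=\al\circ k_E+R_\al$ (with $R_\al$ accounting for precisely the $I_4$ block) translates the \v Cap criterion into $i_\zeta(k-I_4)=0$ for $\zeta\in v_1\ot W+v_2\ot W$. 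Without this condition the distribution $\ww_E^{-1}(\gp)$ need not even be integrable (its integrability is controlled by $k_E(\gp,\gp)\subset\gp$), so there is no leaf space $M$ and no conformal structure at all; once you have spent condition (3) on normality, your descent step has no justification.

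Conversely, condition (3) does not ``supply the vanishings $k_p=k_x=k_z=0$ required by normality'': normality concerns which components of the \emph{value} of the curvature vanish and a trace condition on the $(s,r)$-part, whereas condition (3) is a horizontality statement about \emph{insertions} of $\gp$-directions into the arguments of the curvature; these live in different slots of $\Lambda^2(\g/\gp_2)^*\ot\g$. If you wish to address normality of the descended connection (a point the paper leaves implicit, resting on uniqueness of the characteristic connection and the functoriality of the forward construction), that requires a separate argument; but the first correction needed is that condition (3) must be used for the descent itself.
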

\begin{proof} 
At the current point we obtain the connection $\ww_E$ and the principal $P_2$-bundle which could be a correspondence space for a conformal structure. The theory of correspondence spaces gives the answer when the bundle $\PP_E$ comes from conformal structure \cite{CapCor}.  A Cartan connection $\ww_E$ on $\PP \to \wi{M}$ defines some geometry of type $(G,P)$ locally iff its curvature function $k_E$ vanishes on $\gp$.

We conclude, that $\ww_E$ induces locally some conformal structure iff $i_\zeta k_E=0$ for $\zeta $ from $\R s\oplus \R q\subset\so_{m+1,1}$. This means that the curvature function $k_E$ takes values in $v_0^*\ot W\we v_0\ot W^*\ot\gh$. But this space is an $P$-submodule and $\al$ in an $P$-homomorphism of modules. Therefore a function $\left(k-R_\al\right)$, which is $H$-equivariant prolongation of $\al(k_E)$, also takes values in $v_0^*\ot W\we v_0\ot W^*\ot\gh$. Here we use the fact that an $H$-module is also a $P$-module The reformulation of this condition is $\ww_E$ induces locally some conformal structure iff $i_\zeta\left( k-I_4\right)=0$ for $\zeta \in v_1\ot W+v_2\ot W$.
\end{proof}.

\section{Invariants of conformal geodesics equations}
In this section we provide explicit formulas which completely define a family of equations on conformal geodesics. The computations here are based on formulas for the characteristic connection of the 3rd order ODEs systems \cite{medv2}. We start with study of the family of equation, which is characterized by condition $I_2=0$. Then we compute relations on invariants of conformal geodesics equations.
 \subsection{Invariant $I_2$}
The equality to zero of relative invariants always determines a stable under point transformations family of equations. One of the earliest examples is due to Cartan \cite{CartanProj}. He prove that $f_{y'y'y'y'}$ is relative invariant of the one 2nd order equation of the form $y''=f(x,y,y')$. Therefor, a family of equations
\[ y''=A(x,y)\left(y'\right)^3+B(x,y)\left(y'\right)^3+
C(x,y)\left(y'\right)^3+D(x,y)\]
is invariant under the action of the diffeomorphism group of the plane. Moreover, Cartan shows that every such equation describes geodesics of normal projective structure.

The equations for which the condition $I_2=0$ is satisfied also define nice family of equations. 

\begin{thm}\label{cl1}
Invariant $ I_2 $ is equal to $0$ iff the 3rd order ODEs system has the form:
\[ f^i(q_j, p_k, y_l,x)=3q_i\sum_{j=1}^m \A_j(p_k, y_l,x)q_j + \sum_{j=1}^m \B_j (p_k, y_l,x)q_j +\C(p_k,y_l,x) .\]
\end{thm}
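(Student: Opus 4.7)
The plan is to translate $I_2 = 0$ into an explicit PDE for the right-hand side of \eqref{1}, integrate it, and check the converse directly.

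First I would set $T^i_{jk} := \d^2 f^i / \d q^j\,\d q^k$, which is symmetric in $j,k$ and so defines a section of $W \ot \Sym^2 W^*$. As a $\gl_m$-module this splits as $W^* \oplus (W \ot \Sym^2 W^*)_0$, with trace $T^i_{jk} \mapsto T^s_{sk}$ and splitting $a_k \mapsto \tfrac{1}{m+1}(\delta^i_j a_k + \delta^i_k a_j)$. Hence $\tr_0 T = 0$ is equivalent to the existence of functions $a_k = a_k(q,p,y,x)$ with
\[ \frac{\d^2 f^i}{\d q^j\,\d q^k} = \delta^i_j\,a_k + \delta^i_k\,a_j. \]

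Next I would show $a_k$ is independent of $q$. Differentiating in $q^l$ gives
\[ \frac{\d^3 f^i}{\d q^j\,\d q^k\,\d q^l} = \delta^i_j \frac{\d a_k}{\d q^l} + \delta^i_k \frac{\d a_j}{\d q^l}, \]
and since the left-hand side is totally symmetric in $j,k,l$, equating the right-hand side with its image under $j\leftrightarrow l$ and contracting $i$ with $j$ yields $m\,\d a_k/\d q^l = \d a_l/\d q^k$. Swapping $k\leftrightarrow l$ and combining produces $(m^2-1)\,\d a_k/\d q^l = 0$, hence (since $m\ge 2$) $a_k = a_k(p,y,x)$.

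With $a$ independent of $q$, $f^i$ is at most quadratic in $q$. Writing $f^i = \sum_{j,k} M^i_{jk} q_j q_k + \sum_j B^i_j q_j + C^i$ with $M^i_{jk}$ symmetric in $j,k$ and all coefficients functions of $(p,y,x)$ only, the PDE gives $2M^i_{jk} = \delta^i_j a_k + \delta^i_k a_j$; setting $a_k = 3\A_k$ one obtains $\sum_{j,k} M^i_{jk} q_j q_k = 3 q_i \sum_k \A_k q_k$, which is exactly the form in the theorem (with $B^i_j$ and $C^i$ denoted $\B_j$ and $\C$ as in the paper). Conversely, if $f^i$ has the stated form, a direct computation gives $\d^2 f^i / \d q^j\,\d q^k = 3(\delta^i_j \A_k + \delta^i_k \A_j)$, which is pure trace, so $I_2 = 0$. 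The only delicate step is the symmetry-and-contraction argument that forces $a_k$ to be $q$-independent; everything else is bookkeeping with symmetric tensors.
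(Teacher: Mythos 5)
Your proof is correct and follows essentially the same route as the paper: both translate $\tr_0\bigl(\d^2 f^i/\d q^j\d q^k\bigr)=0$ into the statement that the $q$-Hessian is pure trace, use the total symmetry of the third $q$-derivatives together with $m\ge 2$ to force the trace part to be $q$-independent, and then integrate. Your invariant packaging via the splitting of $W\ot\Sym^2 W^*$ is a tidier way to organize the paper's component-by-component computation, and you correctly retain the $i$-dependence of the lower-order coefficients ($\B^i_j$, $\C^i$) that the displayed formula in the theorem suppresses.
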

\begin{proof}
We don't use Einstein summation notion in this proof. Let  us denote $\frac{\d^2 f^i}{\d q^j \d
q^k}$ as ${I}^i_{jk}$. Since the tensor ${I}^i_{jk}$ is symmetric in indexes $j$ and $k$ it has 2 equal traces, which we denote as $T_j.$ 
The trace-free part of $I_2$ is the following:
\[{I}^i_{jk}-\frac{1}{n+1} (T_j\delta^i_k+T_k\delta^i_j)\]
  We prove now that functions $f^i$ are quadratic polynomials with respect to $q^i$. Indeed, from condition $\tr_0 I=0$ follows that ${I}^i_{jk}=0$ if $i\neq j$ and $i\neq k$.
 Therefore, all partial derivatives $\frac{\d^3 f^i}{\d q_j \d q_k \d q_l}$ are equal to $0$, if two indexes in the set $\{j,k,l\}$ are different from $i$. We differentiate equality 
\[\frac{\d^2 f^i}{\d q_i \d q_j}-\frac{1}{n+1}\sum_{k=1}^m \frac{\d^2 f^k}{\d q_k \d q_j}=0,\]
where $i\neq j$, by $q_i$ and get that $\frac{n}{n+1} \frac{\d^3 f^i}{\d^2 q_i \d q_j}=0$
  Similarly, from equation
\[ \frac{\d^2 f^i}{\d^2 q_i }-\frac{2}{n+1}\sum_{k=1}^m \frac{\d^2 f^k}{\d q_k \d q_i}=0\]
 follows that $\frac{\d^3 f^i}{\d^3 q_i}=0.$
  
  The functions $f^i$ could be expressed in the following form:
  \[f^i=q_i\left( \A^i_j q_j\right) + \B^i_j q_j + \C_i ,\]
  where coefficients $\A^i_j,$ $\B^i_j$ and $\C_i$ don't depend on $q_k.$
The condition $I^i_{ij}=0$ gives us that
\[
\A^i_j-\frac{1}{n+1}\sum_{k=1}^m \A^k_j=0.\]
Therefore, all coefficients $\A^i_j$ are the same for different $i.$ Let $3\A_j=\A^i_j$. Then from condition $I_2=0$ follows that $f^i$ has the form:
\begin{equation}\label{l05} f^i=3q_i\left( \A_j q_j\right) + \B^i_j q_j + \C_i.
\end{equation}
The direct computation shows that for every system of the form \eqref{l05} invariant $I_2$ is equal to 0.
\end{proof}
\begin{cor}
The the family of equations of the form:
\[ f^i(q_j, p_k, y_l,x)=3q_i\sum_{j=1}^m \A_j(p_k, y_l,x)q_j + \sum_{j=1}^m \B_j (p_k, y_l,x)q_j +\C(p_k,y_l,x) \]
is invariant under the group of point transformations.
\end{cor}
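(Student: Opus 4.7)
The plan is to derive this corollary directly from Theorem \ref{cl1} together with the tensorial nature of the fundamental invariant $I_2$ under point transformations. Theorem \ref{cl1} identifies the displayed family of equations as precisely the vanishing locus of $I_2$, so invariance of the family under point transformations reduces to invariance of the condition $I_2 \equiv 0$.

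To make this precise, I would first recall that a point transformation $\varphi$ in the $(x, y)$-variables lifts canonically to $J^3(\R^{m+1},1)$ and preserves the contact system. Consequently it preserves the pair of distributions $(E, V)$ on $\E$ and the induced filtration $C^{-1} \subset C^{-2} \subset C^{-3} = T\E$ that encode the equation. Thus $\varphi$ acts as an isomorphism between the underlying nilpotent $G_0$-structures of any two point-equivalent equations. By the construction of the characteristic Cartan connection in \cite{medv2}, this isomorphism lifts uniquely to an isomorphism of the associated principal $H$-bundles intertwining the two Cartan connections, which in turn intertwines their curvature functions and hence all fundamental invariants, including $I_2$.

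Combining the two ingredients: if the system for $f^i$ has the polynomial form stated in the corollary, then $I_2 \equiv 0$ by Theorem \ref{cl1}; applying any point transformation to such a system preserves the vanishing of $I_2$ by the tensoriality just recalled; applying the converse direction of Theorem \ref{cl1} to the transformed system, it must again have the stated polynomial form. This yields invariance of the family under the full pseudogroup of point transformations.

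The only potentially delicate step is the tensoriality of $I_2$ under point transformations, but this is not a new result: it is a built-in feature of the characteristic Cartan connection of \cite{medv2} and is used implicitly throughout the present paper (for instance in Theorem \ref{t1} and Theorem \ref{t3}). Hence there is no substantive obstacle to overcome here; the corollary is a clean assembly of Theorem \ref{cl1} with previously established invariance properties and requires no additional computation.
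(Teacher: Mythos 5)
Your proposal is correct and matches the paper's (implicit) argument: the paper introduces Theorem~\ref{cl1} precisely with the remark that the vanishing of a relative invariant always cuts out a point-transformation-stable family, and the corollary is then immediate from the characterization of that family as $\{I_2=0\}$. Your additional paragraph justifying the tensoriality of $I_2$ via the lift of point transformations to the characteristic Cartan connection is a correct elaboration of what the paper takes for granted from \cite{medv2}.
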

\subsection{Conditions on conformal geodesics equations}
 The end of the article is devoted to proving of the following theorem, which gives explicit conditions on  coefficients of conformal geodesics equations. The formulas below are quite complicated. I have a realization of this formulas in Maple, which I could send on demand.
\begin{thm}
A system of the 3rd ODEs is a system on conformal geodesics iff the following equations are satisfied:
\begin{align*}
0=&\tr_0\left(\frac{\d^2 f^i}{\d q^j \d
q^k} \right),\\0=&
-\frac{1}{3}\frac{\d f^i}{\d q^k}(I_4)_{kj}-\frac{1}{3}\frac{\d f^k}{\d q^j}(I_4)_{ik}-\frac{d}{dx}I_4,\\0=&
\frac{\d}{\d q^k}I_4,\\0=&
\frac{\d W_2}{\d q^k},\\0=&
\tr_0\left(\frac{\d (W_2)^i_k}{\d p^j}-2\frac{\d (W_3)^i_k}{\d q^j}+\frac{\d (W_3)^i_j}{\d q^k} -8H^{-1}_j(W_2)^i_k+2H^{-1}_k(W_2)^i_j\right), \\0=&
\frac{\d}{\d q^l}\left(-2\frac{\d (W_3)^i_k}{\d q^i}+\frac{\d (W_3)^i_i}{\d q^k}\right), \\0=&
-H^{-1}_l\left(\frac{\d (W_2)^i_k}{\d p^i}-2\frac{\d (W_3)^i_k}{\d q^i}+\frac{\d (W_3)^i_i}{\d q^k} -8H^{-1}_i(W_2)^i_k\right)+ \\
&+\frac{\d}{\d q^l}\left(D_{e^i_{-2}}(W_3)^i_j -D_{e^j_{-2}}(W_3)^i_i\right)+\left(I_4\right)_{ji}\left(W_2\right)^i_l,
\end{align*}
where
\[ D_{e^k_{-2}}W_3=\frac{\d W_3}{\d p^k}-\frac{\d W_3}{\d q^l}B^l_k-6H^{-1}_kW_3+2(H^{-1}_k-H^{-2}_lB^l_k)W_2+G^{*,-2}_{*k}W_3-W_3G^{*,-2}_{*k} \]
and $\tr_0$ is a trace-free part of the tensor in the indexes $i,j$.
\end{thm}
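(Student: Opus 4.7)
The plan is to work through the three conditions of Theorem~\ref{t3} one by one and translate each into an explicit PDE on $f^i$ using the formulas for the characteristic Cartan connection from \cite{medv2}. The main tools are: the explicit coordinate expressions for $I_2$, $I_4$, $W_2$, $W_3$ listed in the introduction; the connection one-forms of the characteristic connection, which express universal covariant derivatives as explicit first-order differential operators on the jet space; and the structure equations, which read off curvature components from commutators. Step~1 is immediate: condition (\ref{t3l1}), $I_2 = 0$, is literally the first displayed equation, and Theorem~\ref{cl1} already pins $f^i$ down to a specific cubic polynomial in $q$, which I would use throughout to eliminate higher-order $q$-terms in the subsequent steps.

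Step~2 handles condition (\ref{t3l2}) by expanding $D I_4$ in a basis of $\g^*$. The $v_2 \ot e_k$-component of $D_{V_2 \ot W} I_4$ should reduce to $\partial I_4/\partial q^k = 0$; the $v_0, v_1$ components vanish by $P_2$-equivariance of $I_4$. The $x$-component of $D_{\sl_2} I_4$ should give the equation with $dI_4/dx$ together with the two $-\tfrac{1}{3}(\partial f/\partial q)\,(I_4)$ contractions coming from the horizontal lift of $x$; the $h$-component is automatic from the weight of $I_4$, and the $y$-component, combined with the Bianchi identity relating vertical derivatives of $I_4$ to the trace of $W_2$, should deliver $\partial W_2/\partial q^k = 0$.

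Step~3 is the content of condition (\ref{t3l3}). Contracting $k - I_4$ with $\zeta = v_2 \ot e_j$ extracts the curvature components $\wi\Omega[\wi\omega_x \we \wi\omega_{-1}^j]$ and relatives, which house derivatives of $W_3$; setting them to zero produces the equation $\partial(\cdots)/\partial q^l = 0$ involving $W_3$ in the statement. Contracting with $\zeta = v_1 \ot e_j$ extracts the components $\wi\Omega[\wi\omega_x \we \wi\omega_{-2}^j]$, which encode the trace-free parts of $\partial W_2/\partial p$ and $\partial W_3/\partial q$ mixed with $H^{-1}$-weighted $W_2$-terms and the operator $D_{e^k_{-2}} W_3$. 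The displayed coordinate form of $D_{e^k_{-2}}$ is read off directly from the $v_1 \ot W$-component of the connection: the piece $\partial/\partial p^k - B^l_k\,\partial/\partial q^l$ is the horizontal lift, the $H^{-1}_k$-terms come from the $\sl_2$-components, and the $G^{*,-2}_{*k}$ commutator captures the $\gl_m$-action on the indices of $W_3$.

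The main obstacle is the bookkeeping in Step~3: splitting each $i_\zeta(k - I_4) = 0$ tensor equation into its trace and trace-free parts in the pair $(i,j)$, and re-expressing the structure functions of the characteristic connection in terms of $W_2$, $W_3$, $I_4$, $H^{-1}$, $H^x$ using the normalization conditions of \cite{medv2}. Each identification uses these normalizations to zero out Lie-algebra-valued remainders, and the resulting algebra is rigid but voluminous — consistent with the author's remark that a Maple realization is available on request.
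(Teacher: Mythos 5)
Your overall architecture --- translate each of the three conditions of Theorem \ref{t3} into coordinates via the explicit characteristic connection --- is the paper's, and Step 1 together with the $x$- and $v_2\ot e_k$-components of Step 2 match the actual computation ($D_xI_4=G^x.I_4+I_4.G^x-\tfrac{d}{dx}I_4$ and $D_{e^k_{-1}}I_4=\tfrac{\d}{\d q^k}I_4$). But two of your claims are wrong in a way that would derail the derivation. First, the $v_0\ot W$- and $v_1\ot W$-components of $D_{V_2\ot W}I_4$ do \emph{not} vanish by $P_2$-equivariance: these are covariant derivatives in horizontal directions belonging to $\g_-$, and equivariance under the structure group only controls the $\gh$-directions. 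The paper must \emph{prove} $D_{e^i_{-2}}I_4=D_{e^i_{-3}}I_4=0$ as consequences of $D_xI_4=0$ and $D_{e^i_{-1}}I_4=0$ via the Ricci-type identity $D_XD_Y-D_YD_X=D_{[X,Y]}+D_{\Omega(X,Y)}$, which in turn requires checking that the curvature terms $\Omega(e^j_{-1},x)$ and $\Omega(e^j_{-2},x)$ act trivially on $I_4$ (using $I_2=0$ and the characteristic normalization). Without that argument your list of equations is not visibly sufficient --- you would owe the reader two further equations for $D_{e^i_{-2}}I_4$ and $D_{e^i_{-3}}I_4$. Second, $\tfrac{\d W_2}{\d q^k}=0$ is not the $y$-component of $D_{\sl_2}I_4$; that component is identically zero because $\sl_2$ acts trivially on $S^2(W^*)$. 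It is the condition $D_{e^k_{-1}}W_2=0$, i.e.\ the degree-3 piece of the correspondence condition (\ref{t3l3}).

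Beyond these, Step 3 is missing its key mechanism. The curvature components killed by $i_\zeta(k-I_4)=0$ (for instance $C_h(e^j_{-3},x)$, $C_y(e^k_{-3},x)$, $C^i_{-1}(e^k_{-2},e^j_{-3})$) are not themselves fundamental invariants; the paper expresses them in terms of covariant derivatives of $W_2$ and $W_3$ by solving the Bianchi identity $\d C=dC+C\circ C$ degree by degree (degrees 3 through 6), and this is precisely where combinations such as $\tr_0\bigl(D_{e^k_{-1}}(W_3)^i_j-2D_{e^j_{-1}}(W_3)^i_k+D_{e^j_{-2}}(W_2)^i_k\bigr)$ come from. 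In particular, the term $\left(I_4\right)_{ji}\left(W_2\right)^i_l$ in the last equation arises from the torsion contribution $C\circ C$, whose first nonzero part appears in degree 6; ``reading off curvature components from commutators'' without running the Bianchi identity will not produce these expressions or that term.
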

To proceed to direct computation, we recall first formulas for the characteristic Cartan connection of the 3rd order ODEs system \cite{medv2}.
We are working using a coframe $\theta$, which is defined in the following way:
\begin{align*}
  \theta_x & =dx,& \theta^i_{-1} & =dq^i-f^i(x,y,p,q)\,dx,&\\
  \theta^i_{-2} & =dp^i-q^i\,dx ,&
  \theta^i_{-3} & =dy^i-p^i\,dx,.
\end{align*}
 Consider a Cartan geometry of the type $(G,H)$ over an $H$-principle bundle $\PP\to\E$ with a Cartan connection $\tilde{\ww}\colon T\PP\to \g$:
 \[
 \tilde{\omega }=\tilde{\omega }^i_{-3} v_0\otimes e_i +
 \tilde{\omega }^i_{-2} v_1\otimes e_i +
 \tilde{\omega }^i_{-1} v_2\otimes e_i + \tilde{\omega }_x x + \tilde{\omega }_h h +
\tilde{\omega }^i_j e^j_i + \tilde{\omega }_y y .
\]
  We  fix a gauge for the connection $\tilde{\omega } $ with the help of a section $s\colon\E\to\tilde\PP$, which is uniquely determined with the following formulas:
 \begin{align*}
 s^* \tilde{\omega}_{-3}^i &=\theta_{-3}^i ,\\
 s^* \tilde{\omega}_h &\equiv 0 \mod \langle \theta_{-3}^i,\theta_{-2}^i,\theta_{-1}^i \rangle ,\\
 s^* \tilde{\omega}_x &\equiv -\theta_x \mod \langle \theta_{-3}^i,\theta_{-2}^i,\theta_{-1}^i \rangle.
\end{align*}
A pullback  $\omega \colon T\E \to \g$ is defined by the formula $ \omega  = s^* \tilde{\omega }$.
According to \cite{medv2}, a characteristic Cartan connection adapted to the equation \eqref{1} has the form:
\begin{align*}
\omega_{-3}^i &=\theta_{-3}^i ,\\
\omega_{-2}^i &=\theta_{-2}^i + A_j^{i}\theta_{-3}^j,\\
\omega_{-1}^i &=\theta_{-1}^i + B_j^{i}\theta_{-2}^j+C_j^{i}\theta_{-3}^j,\\
\omega_x &=-\theta_x +E_j\theta_{-3}^j , \\
\omega_h &=F^{-2}_j\theta^j_{-2}+ F^{-3}_j\theta^j_{-3},\\
\omega^i_j &=G^{i,x}_j \theta_x +G^{i,-2}_{jk}\theta^k_{-2}+G^{i,-3}_{jk}\theta^k_{-3},\\
\omega_y &=H^x \theta_x + H^{-1}_j\theta^j_{-1}+H^{-2}_j\theta^j_{-2}+ H^{-3}_j\theta^j_{-3},
\end{align*}
with the following coefficients:

\begin{align*} 
A_j^i&=-\frac{1}{3}\frac{\d f^i}{\d q^j} ,\\
 B^i_j&=-\frac{2}{3}\frac{\d f^i}{\d q^j}, \\
  C^i_j&=-\frac{\d f^i}{\d p_j} +\frac{2}{3}\frac{d}{dx}\frac{\d f^i}{\d q^j}
-  \frac{2}{9} \frac{\d f^i}{\d q^k}\frac{\d f^k}{\d q^j}-2H^x\delta^i_j ,\\
E_j&= -\frac{1}{3(m+1)}\frac{\d^2 f^i}{\d q^j\d q^i},\\
F^{-2}_j&=\frac{1}{6(m+1)}\frac{\d^2 f^i}{\d q^j\d q^i},\\ 
 F^{-3}_j &=\frac{\d H^x}{\d q^j} -
H^{-1}_k  \frac{\d f^k}{\d q_j} -\frac{1}{3(m+1)}\frac{d}{d x}\frac{\d^2 f^i}{\d q^j\d q^i},\\ 
G^{i,x}_j&=-\frac{1}{3}\frac{\d f^i}{\d q^j}, \\
 G^{i,-2}_{jk} &=-\frac{1}{3}\frac{\d^2 f^i}{\d q^j\d q^k},\\
  G^{i,-3}_{jk} &= -\frac{1}{3}\frac{\d^2 f}{\d p^k\d q^j} - \frac{d}{dx} G^{i,-2}_{jk} - G^{i,x}_l G^{l,-2}_{jk} + G^{i,-2}_{lk}  G^{l,x}_j , \\
  H^x&=\frac{1}{4m}\left(-\frac{\d f^i}{\d p_i} +\frac{d}{dx}\frac{\d f^i}{\d q^i}
-  \frac{1}{3} \frac{\d f^i}{\d q^k}\frac{\d f^k}{\d q^i}  \right), \\
H^{-1}_j&= \frac{1}{6(m+1)}\frac{\d^2 f^i}{\d q^j\d q^i},\\ 
 H^{-2}_j& = \frac{\d H^x}{\d q^j} - \frac{d }{d x}H^{-1}_j -
H^{-1}_k  \frac{\d f^k}{\d q^k}, \\
 H^{-3}_j &= \frac{\d H^x}{\d p^j} - \frac{d H^{-2}_j}{d x} -
H^{-1}_k  \frac{\d f^k}{\d q^j} - 2H^xH^{-1}_j
\end{align*}

Recall that there exist 4 fundamental invariants for the system of the third order ODEs. The list of the invariants is the following:
\begin{align*} 
\left(W_2\right)^i_j =& \tr_0 \left(\frac{\d f^i}{\d p^j} -
\frac{d}{dx} \frac{\d f^i}{\d q^j} +
\frac{1}{3} \frac{\d f^i}{\d q^k}\frac{\d f^k}{\d q^j} \right), \\
\left(I_2\right)^i_{jk} =& \tr_0\left(\frac{\d^2 f^i}{\d q^j \d
q^k}
 \right), \\
\left(W_3\right)^i_j =&
\frac{\partial f^i}{\partial y^j} +\frac{1}{3} \frac{\partial f^i}{\partial q^k}\frac{\partial f^k}{\partial p^j} - \frac{d}{dx} \frac{\partial f^i}{\partial
p^j} + \frac{2}{3}\frac{d^2}{dx^2} \frac{\partial f^i}{\partial
q^j} + \frac{2}{27}(\frac{\partial f^i}{\partial q^j})^3 \\ 
&-\frac{4}{9}
\frac{\partial f^i}{\partial q^k}\frac{d}{dx}\frac{\partial f^k}{\partial q^j} 
 -
\frac{2}{9} \frac{d}{dx}\left(\frac{\partial f^i}{\partial q^k}\right)\frac{\partial f^k}{\partial q^j} - 2\delta^i_j\frac{d}{dx}H^x ,\\
 \left(I_4\right)_{jk} =& - \frac{\d H_k^{-1}}{\d p_j}+\frac{\d}{\d q^k}H^{-2}_j+2H_j^{-1} H_k^{-1},
\end{align*}  

\subsection{Covariant derivation}
Assume that $\wi f\colon\tilde\PP\to V$ is an $H$-equivariant function, i.e. $\wi f(g.p)=\Ad (g^{-1}).\wi f(p).$ If $\wi\ww$ is a Cartan connection on $\PP$ then a decomposition of $\wi\ww$ in some basis $X_i$ of $\g$:
\[ \wi\ww=\sum X_i \wi\ww^i \]
defines a coframe $\wi\ww^i$ on $\PP$. In this settings the covariant derivative along directions $X_i$ can be determined from the decomposition:
\[
  df=\sum \left(D_{X_i} \wi f\right) \wi\ww^i=\sum \wi f_i \wi\ww^i. 
\]

In the gauge, which is determined by the section $s$, we denote the pullback of the function $\wi f$ by $f$  and the pullback of the function $\wi f_i$ by $f_i$. If we assume that $\dim \g = n+m$ and $\dim \gh = n$, than 
\[
 df=-\sum_{i=1}^n\left( X_i.f \right)\ww^i+\sum_{i=n+1}^{n+m}f_i\ww^i, 
\]
 where $\ww^i=s^*\wi \ww^i$. Therefore, to compute a covariant derivative along the direction $X_i\in \g_-$ in the selected gauge we can use the formula:
\begin{equation}\label{l02} 
f_i= d f(X_i)+\sum_{j=1}^n\left( X_j.f \right)\ww^j(X_i).
 \end{equation}
 Now we apply the formulas above to the calculation of covariant of covariant derivatives of the invariant $I_4$. The full differential on the surface $\E$ has the form:
 \begin{equation}
  d f = \frac{d f}{dx}\oo_x+\frac{\d f}{\d y^i} \oo^i_{-3}+\frac{\d f}{\d p^i} \oo^i_{-2}+\frac{\d f}{\d q^i} \oo^i_{-1}.\label{l01}
\end{equation} 
Recall that $\langle e^i_{-3},e^i_{-2},e^i_{-1}\rangle = \langle v_0\ot e_i,v_1\ot e_i,v_2\ot e_i\rangle$ is the basis of $V_2\ot W$. Using formulas \eqref{l01}-\eqref{l02} and the \ref{t3l2} condition of the theorem \ref{t3} we obtain that the following conditions should be satisfied for the conformal geodesics equation:
  \begin{align}\label{l03}
  D_{x} I_4&=G^x.I_4+I_4.G^x-\frac{d}{dx}I_4=0,\\
  \label{l04}D_{e^i_{-1}} I_4&=\frac{\d}{\d q^i}I_4=0.  \end{align}

We prove that from the conditions \eqref{l03}-\eqref{l04} follows \[D_{\sl_2}=D_{V_2\ot W}(I_4)=0.\]

First, note that $D_{h}I_4=D_{y}I_4=0$ since the Lie algebra $\sl_2$ acts on $I_4\in S^2(W^*)$ trivially. Secondly, conditions $D_{e^i_{-2}} I_4=0$ and $D_{e^i_{-3}} I_4=0$ follow from this useful formula:
\begin{equation} \label{l07}
D_X D_Y - D_Y D_X = D_{[X,Y]}+D_{\OO(X,Y)}. 
\end{equation}

Indeed, $D_h$ and $D_y$ acts trivially on $I_4\in S^2(W^*).$  Coefficients $\OO^i_j(e^k_{-1},x)$ and $\OO^i_j(e^k_{-2},x)=0$ are equal to $0$ for characteristic connections, and $\OO^i_{-a}(e^j_{-1},x)=\OO^i_{-a}(e^j_{-2},x)=0,$ $a=1,2,3$ for characteristic connections with invariant $I_2=0$. Therefore $D_{\OO(e^j_{-1},x)}I_4=D_{\OO(e^j_{-2},x)}I_4=0,$ and
\[ D_{e^i_{-2}}I_4=D_{[x,e^i_{-1}]}I_4=D_x D_{e^i_{-1}}I_4 - D_{e^i_{-1}} D_x I_4=0,\] 
\[ D_{e^i_{-3}}I_4=D_{[x,e^i_{-2}]}I_4=D_x D_{e^i_{-2}}I_4 - D_{e^i_{-2}} D_x I_4=0.\]
\subsection{Correspondence conditions}  Now we proceed with the computations of formulas which express the \ref{t3l3}rd condition of the theorem \ref{t3} in terms of the fundamental invariants of the system.
\begin{prop}\label{p4}
The condition \ref{t3l3} of the theorem 3 is equivalent to the following conditions on Wilczynski invariants:
\begin{align}
&D_{e^k_{-1}}(W_2)^i_j=0,\label{p4l1}\\
&\tr_0\left( D_{e^k_{-1}}(W_3)^i_j - 2D_{e^j_{-1}}(W_3)^i_k+D_{e^j_{-2}}(W_2)^i_k\right)=0,\\
& D_{e^l_{-1}}\left( D_{e^k_{-1}}(W_3)^i_i - 2D_{e^i_{-1}}(W_3)^i_k+D_{e^i_{-2}}(W_2)^i_k\right)=0, \\
& D_{e^k_{-1}}\left( D_{e^i_{-2}}(W_3)^i_j -D_{e^j_{-2}}(W_3)^i_i\right)+I_4.W_2=0 \label{20},
\end{align}

 Here the operator $\tr_0$ is the trace-free part of a tensor with respect to the indexes $i,j.$
\end{prop}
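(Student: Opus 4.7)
The plan is to unpack the single geometric condition $i_\zeta(k-I_4)=0$ into scalar vanishing conditions expressed in terms of the fundamental invariants. I would first split $\zeta\in v_1\ot W+v_2\ot W$ into its two graded components: insertions $i_{e^k_{-2}}$ must annihilate all of $k$, while $i_{e^k_{-1}}$ must annihilate $k$ except for the specific slot carrying $I_4$. By the proof of Theorem~\ref{t1}, the image of $R_\al$ occupies exactly one graded slot of the curvature (the $h$-valued coefficient of $\wi\ww^i_{-3}\we\wi\ww^j_{-1}$), so subtracting $I_4$ only removes that one slot and every other contraction must vanish independently.

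Next I would invoke the structure theory of~\cite{medv2} together with Tanaka--Morimoto prolongation: once the fundamental invariants $W_2$, $W_3$, $I_2$, $I_4$ are fixed, every component of the characteristic curvature $k$ is a graded polynomial in universal covariant derivatives of these four tensors, with quadratic corrections governed by the commutator formula~\eqref{l07}. Under the standing hypotheses $I_2=0$ from condition \ref{t3l1} and $D_{\gsl_2}I_4=D_{V_2\ot W}I_4=0$ from condition \ref{t3l2}, many of the would-be terms collapse and the list of independent building blocks shortens considerably, which is what makes it tractable to read off the contractions one at a time.

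I would then match each of the four displayed equations to a graded piece of the two contractions, proceeding in order of increasing degree. Contracting $i_{e^k_{-1}}$ with the slot of $k$ that carries $W_2$ reads off $D_{e^k_{-1}}(W_2)^i_j$, giving~\eqref{p4l1}. The next degree produces the $W_3$-piece; after applying an algebraic Bianchi identity that cyclically rotates the three insertions at positions $e^k_{-1}$, $e^j_{-1}$, and $e^j_{-2}$, the trace-free part in $(i,j)$ yields the second equation. The third equation arises from exactly the same combination after one additional covariant derivative $D_{e^l_{-1}}$, which corresponds to extracting the next graded layer of the contraction $i_{e^l_{-1}}$. Finally, equation~\eqref{20} comes from the highest slot: the quadratic correction $I_4\cdot W_2$ appears precisely because of the commutator formula~\eqref{l07}, when two covariant derivatives in directions $e^k_{-1}$ and $e^i_{-2}$ are swapped on $W_3$ and the $D_{\Om(X,Y)}$ term picks up the only nonzero curvature block, namely $I_4$.

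The main obstacle is the bookkeeping of the Bianchi corrections and of the structure constants of $\g$: the exact numerical coefficients (the factor $-2$ in front of $D_{e^j_{-1}}(W_3)^i_k$, and the sign and scaling of the $I_4\cdot W_2$ correction) are forced by the precise commutator relations in $\g$ and by the normalization of the characteristic connection. I would therefore fix a graded basis of $\g$ once, tabulate all relevant brackets, and then propagate the computation one degree at a time, using $I_2=0$ and the parallelism of $I_4$ to eliminate the spurious terms that would otherwise clutter the identification.
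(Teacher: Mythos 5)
Your overall strategy coincides with the paper's: both unpack condition \ref{t3l3} by working through the Bianchi identity degree by degree, expressing the relevant curvature components through universal covariant derivatives of $W_2$, $W_3$, $I_4$ under the standing hypotheses $I_2=0$ and $D_{\sl_2}I_4=D_{V_2\ot W}I_4=0$, and both attribute the quadratic term $I_4.W_2$ in \eqref{20} to the curvature correction in the commutator of covariant derivatives (formula \eqref{l07}, equivalently the $C\circ C$ term of the Bianchi identity, whose first nonzero contribution occurs in degree $6$). So the route is the right one in outline.

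There are, however, two concrete gaps. First, your localization of the $R_\al$-contribution is off: by Proposition \ref{14} the tensor $R_\al$ is nonzero on four distinct argument pairs, so after extension it populates several degree-$4$ slots of the curvature, not a single one, and the slot that actually matters for the contractions $i_\zeta$ with $\zeta \in v_1\ot W+v_2\ot W$ and for \eqref{20} is the $y$-valued component $C_y(e^l_{-1},e^j_{-2})=I_4\cdot y$ coming from $R(q_1,s_2)$ --- not the $h$-valued coefficient of $\wi\ww^i_{-3}\we\wi\ww^j_{-1}$. Starting from the wrong slot would derail the degree-$5$ and degree-$6$ computations. Second, and more importantly, the proposition asserts an equivalence, and the sufficiency direction requires showing that the four listed conditions \emph{exhaust} condition \ref{t3l3}: one must verify that all remaining contracted curvature components --- in degree $5$ the terms $C^i_j(e^k_{-2},e^l_{-3})$, $C_h(e^k_{-2},e^l_{-3})$, $C_y(e^k_{-1},e^l_{-3})$, $C_y(e^k_{-2},e^l_{-2})$, and in degree $6$ the term $C_y(e^k_{-2},e^l_{-3})$ --- vanish as consequences of \eqref{p4l1}--\eqref{20}, and that degree $7$ imposes nothing new. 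The paper does this by first solving the degree-$4$ Bianchi relations for $C_h(e^j_{-3},x)$, $C^i_k(e^j_{-3},x)$ and $C^i_{-2}(e^k_{-3},e^j_{-3})$ in terms of $D_{e^k_{-1}}W_3$ and $D_{e^k_{-2}}W_2$, and then propagating $D_{e^l_{-1}}$ of these expressions; your plan never addresses this step, so as written it would establish at best the necessity of the four conditions, not the stated equivalence.
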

\begin{proof}
  We use the Bianchi identity to compute explicit formulas for condition \ref{t3l3} in terms of fundamental invariants and their covariant derivatives. We denote the structure function of the Cartan connection $\ww$ as $C.$ The Bianchi identity can be written in the form
  \[ \d C=d C+C\circ C,\]
  where $\d$ is a Lie algebra differential, 
  \[d C=x^*\we D_xC+{e_{-1}^i}^*\we D_{e_{-1}^i}C+{e_{-2}^i}^*\we D_{e_{-2}^i}C+{e_{-3}^i}^*\we D_{e_{-3}^i}C \]
  is the part of universal covariant derivative of $C$ and
 \[ (C\circ C)(X_1,X_2,X_3)=C(C(X_{[1},X_2),X_{3]}).\]
 For torsion free geometries the term $C\circ C$ is zero. However, the connection $\ww$ has a torsion, that is why $C\circ C$ is not 0 in our situation. The first non-zero term of $C\circ C$ has the degree 6 in an assumption that invariant $I_2$ is equal to 0. Therefore, $C\circ C$ term affects only formula \eqref{20}. Present the structure function $C$ as a sum of the following form:
 \[
 C=C^i_{-3} e^i_{-3} + C^i_{-2} e^i_{-2} +
 C^i_{-1} e^i_{-1} + C_x x + C_h h +
C^i_j e^j_i + C_y y.
\]
We use the same notation for the Lie algebra differential of $C$:
 \[
 \d C=\d C^i_{-3} e^i_{-3} + \d C^i_{-2} e^i_{-2} +
 \d C^i_{-1} e^i_{-1} + \d C_x x + \d C_h h +
\d C^i_j e^j_i + \d C_y y.
\]

  In the 3rd degree the Bianchi identity gives us among the others relations the following one:
  \begin{align*}  \d C^i_{-3}(e^j_{-3},e^k_{-2},x)=&-C^i_j(e^k_{-2},x)+2C_h(e^k_{-2},x)\delta^i_j \\&+ C_x(e^j_{-3},x)\delta^i_k-C^i_{-3}(e^j_{-3},e^k_{-3})+C^i_{-2}(e^j_{-3},e^k_{-2}) \\&= C_x(e^j_{-3},x)\delta^i_k-C^i_{-3}(e^j_{-3},e^k_{-3})+C^i_{-2}(e^j_{-3},e^k_{-2}) =0.
  \end{align*} 
  Terms $C^i_j(e^k_{-2},x)$ and $ 2C_h(e^k_{-2},x)$ are equal to 0 due to normality conditions on the curvature. Assuming $i=j=k$, we get that: 
  \[C_x(e^i_{-3},x)=C^i_{-2}(e^i_{-3},e^i_{-2}).\]
 In the degree 3 we have only one non-zero covariant derivative:
 \begin{multline*} \d C^i_{-2}(e^k_{-1},e^j_{-3},x) =-2C_y(e^k_{-1},x)\delta^i_j+C^i_{-2}(e^j_{-3},e^k_{-2})-C^i_{-2}(e^j_{-3},e^k_{-2})-\\C_x(e^j_{-3},x)\delta^i_k 
 =C^i_{-2}(e^j_{-3},e^k_{-2})-C^i_{-2}(e^j_{-3},e^k_{-2}) -C^j_{-2}(e^j_{-3},e^j_{-2})\delta^i_k =D_{e^k_{-1}}(W_2)^i_j. 
\end{multline*} 
  From condition \ref{t3l3} of the theorem \ref{t3} follows that covariant derivative $D_{e^k_{-1}}W_2$ should be 0 for equations on conformal circles. The covariant derivative $D_{e^k_{-1}}W_2$ is the only fundamental invariant covariant derivative  of degree 3. Since $D_{e^k_{-1}}W_2=0$, all terms of the degree 3 in the curvature function apart from the Wilczynski invariant $W_3$ should be equal to 0 for conformal geodesics equations.
  
  In the degree 4 we are interested in the following parts of the Bianchi identity:
  \begin{align*}
  \d C^i_{-2}(e^k_{-2},e^j_{-3},x) =& C^i_{-1}(e^k_{-2},e^j_{-3})-C^i_{-2}(e^k_{-3},e^j_{-3})-C_k^i(e^j_{-3},x)=D_{e^k_{-2}}(W_2)^i_j,\\
  \d C^i_{-1}(e^k_{-1},e^j_{-3},x) =& 2\delta^i_k C_h(e^j_{-3},x)-C^i_{-1}(e^k_{-2},e^j_{-3})-C_k^i(e^j_{-3},x)=D_{e^k_{-1}}(W_3)^i_j, \\
  \d C^i_{-1}(e^k_{-2},e^j_{-2},x) =& -C^i_{-1}(e^k_{-2},e^j_{-3}) -C^i_{-1}(e^k_{-3},e^j_{-2})=0,\\
  \d C^i_{-1}(e^k_{-1},e^j_{-3},x) =& 2\delta^i_k C_h(e^j_{-3},x)- C_k^i(e^j_{-3},x) -2\delta^i_j C_h(e^k_{-3},x)+C_j^i(e^k_{-3},x)-\\
  &C^i_{-2}(e^k_{-3},e^j_{-3})=0.
  \end{align*}
  Solving these equations we get that:
  \begin{align}
  C^i_{-2}(e^k_{-3},e^j_{-3})&=D_{e^j_{-1}}(W_3)^i_k-D_{e^k_{-1}}(W_3)^i_j, \\
  C_k^i(e^j_{-3},x)&=D_{e^k_{-1}}(W_3)^i_j - D_{e^j_{-1}}(W_3)^i_k-D_{e^k_{-2}}(W_3)^i_j,\\
\label{l09} C_h(e^j_{-3},x)&= -\frac{1}{2}\tr^i_j\left( D_{e^k_{-1}}(W_3)^i_j - 2D_{e^j_{-1}}(W_3)^i_k+D_{e^j_{-2}}(W_2)^i_k\right),\\
  C^i_{-1}(e^k_{-2},e^j_{-3})&=\tr_0\left( D_{e^k_{-1}}(W_3)^i_j - 2D_{e^j_{-1}}(W_3)^i_k+D_{e^j_{-2}}(W_2)^i_k\right)=0,
  \end{align}
 where $\tr_0$ is the trace-free part of the tensor with respect to the indexes $i,j.$ All other parts of the curvature function of the degree 4 are equal to 0.
 
 In the degree 5 there are only 4 terms of the curvature function which should be 0 and can give us conditions on Wilczynski invariants. These terms are $C_j^i(e^k_{-2},e^l_{-3}) $, $C_h(e^k_{-2},e^l_{-3})$, $C_y(e^k_{-1},e^l_{-3})$ and $C_y(e^k_{-2},e^l_{-2})$. The Bianchi identity gives us the following equality: 
 \begin{equation}\label{l06}
 \d C_h(e^l_{-1},e^j_{-3},x)=-C_h(e^l_{-2},e^j_{-3})+C_y(e^l_{-1},e^j_{-3})=D_{e^l_{-1}}C_h(e^j_{-3},x).
 \end{equation}
 From \eqref{l06} and \eqref{l09} we obtain that the term
 \[
 D_{e^l_{-1}}\left( D_{e^k_{-1}}(W_3)^i_j - 2D_{e^j_{-1}}(W_3)^i_k+D_{e^j_{-2}}(W_2)^i_k\right)
 \]
 should be equal to 0 for conformal geodesics equation.
 After computations of the degree 4 we know that 
 \[C(e^l_{-1},e^j_{-2})=C_y(e^l_{-1},e^j_{-2})=I_4\cdot y.\] 
 Now we apply formula \eqref{l07} to the invariant $W_2$:
 \[ D_{e^l_{-1}} D_{e^k_{-2}}W_2-D_{e^l_{-2}} D_{e^k_{-1}}W_2=(I_4)_{lk}\cdot y.W_2=0.\]
 Therefore $D_{e^l_{-1}} D_{e^k_{-2}}W_2=D_{e^k_{-2}} D_{e^l_{-1}}W_2=0$ and
 \begin{equation} \label{l08} D_{e^l_{-1}}\left( D_{e^k_{-1}}(W_3)^i_j - 2D_{e^j_{-1}}(W_3)^i_k\right)=0
 \end{equation}
 Finally, we can easily deduce from \eqref{l08} that $ D_{e^l_{-1}} D_{e^k_{-1}}W_3=0.$ This leads us to the fact that 
 \[ D_{e^l_{-1}}C^i_{-2}(e^k_{-3},e^j_{-3})=D_{e^l_{-1}}C_k^i(e^j_{-3},x)=0.\]
From equality $D_{e^k_{-1}}C_h(e^j_{-3},x)=0$ follows that all terms $C_j^i(e^k_{-2},e^l_{-3}) ,$ $C_h(e^k_{-2},e^l_{-3}),$ $C_y(e^k_{-1},e^l_{-3}),$ and $C_y(e^k_{-2},e^l_{-2})$ are equal to 0.
 
 In the degree 6 the only term $C_y(e^k_{-2},e^l_{-3})$ gives us conditions on invariant. From the Bianchi identity we get that the following expression should be equal to 0:
 \[\d C_y(e^j_{-1},e^k_{-3},x)= -C_y(e^j_{-2},e^k_{-3})=D_{e^j_{-1}}C_y(e^k_{-3},x).\]
 The coefficient $C_y(e^k_{-3},x)$ could be found from the following equality:
 \[ D_{e^k_{-2}}(W_3)^i_j=\d C_{-1}^i(e^k_{-2},e^j_{-3},x)=-C_{-1}^i(e^k_{-3},e^j_{-3})-2\delta^i_k C_y(e^j_{-3},x) .\]
 Symmetrization of the previous equality by $i,j$ together with the trace operator gives us formula for $C_y(e^k_{-3},x)$:
  \[ C_y(e^k_{-3},x)=-\frac{1}{2(n+1)} \left(D_{e^i_{-2}}(W_3)^i_j +D_{e^j_{-2}}(W_3)^i_i\right).\]
  
  In the degree 7 there is no conditions on the curvature.
 \end{proof}
 To obtain explicit formulas on invariants we should use formula \eqref{l02}. Covariant derivatives of Wilczynski invariant have the following expressions:
 \begin{align*}
 D_{e^k_{-1}}W_2=&\frac{\d W_2}{\d q^k},\\
 D_{e^k_{-1}}W_3=&\frac{\d W_3}{\d q^k}+2H^{-1}_kW_2, \\
 D_{e^k_{-2}}W_2=&\frac{\d W_2}{\d p^k}-4H^{-1}_kW_2, \\
 D_{e^k_{-2}}W_3=&\frac{\d W_3}{\d p^k}-\frac{\d W_3}{\d q^l}B^l_k-6H^{-1}_kW_3+\\
 &2(H^{-2}_k-H^{-1}_lB^l_k)W_2+G^{*,-2}_{*k}.W_3-W_3.G^{*,-2}_{*k} \\
 \end{align*}
 Using previous formulas we get that conditions of the proposition \ref{p4} are expressed in the following way:
  \begin{align*}
 &\frac{\d W_2}{\d q^k}=0,\\
 &\tr_0\left(\frac{\d (W_2)^i_k}{\d p^j}-2\frac{\d (W_3)^i_k}{\d q^j}+\frac{\d (W_3)^i_j}{\d q^k} -8H^{-1}_j(W_2)^i_k+2H^{-1}_k(W_2)^i_j\right)=0, \\
 &\frac{\d}{\d q^l}\left(-2\frac{\d (W_3)^i_j}{\d q^i}+\frac{\d (W_3)^i_i}{\d q^j}\right)=0, \\
 & \frac{\d}{\d q^l}\left(D_{e^i_{-2}}(W_3)^i_j -D_{e^j_{-2}}(W_3)^i_i\right)+\left(I_4\right)_{ji}\left(W_2\right)^i_l-\\&\qquad\quad H^{-1}_l\left(\frac{\d (W_2)^i_j}{\d p^i}-2\frac{\d (W_3)^i_j}{\d q^i}
 +\frac{\d (W_3)^i_i}{\d q^j} -8H^{-1}_i(W_2)^i_j\right)=0;
 \end{align*}
 The formulas above end our computations.

 \bibliography{my}{}

\begin{thebibliography}{10}

\bibitem{EastP}
R.~Baston and M.~Eastwood.
\newblock {\em The Penrose transform. Its interaction with representation
  theory}.
\newblock The Clarendon Press, Oxford University Press, New York, 1989.

\bibitem{CartanProj}
E.~Cartan.
\newblock Sur les vari\'{e}t\'{e}s \`{a} connection projective.
\newblock {\em Bull. Soc. Math. France}, 52:205--241, 1924.

\bibitem{dkm}
B.~Doubrov, B.~Komrakov, and T.~Morimoto.
\newblock Equivalence of holonomic differential equations.
\newblock {\em Lobachevskij Journal of Mathematics}, 3:39--71, 1999.

\bibitem{medv1}
A.~Medvedev.
\newblock Geometry of third order ode systems.
\newblock {\em Archivium Mathematicum}, 46:351--361, 2010.

\bibitem{medv2}
A.~Medvedev.
\newblock Third order odes systems and its characteristic connections.
\newblock {\em Symmetry, Integrability and Geometry: Methods and Applications
  (SIGMA)}, 7, 2011.

\bibitem{mor93}
T.~Morimoto.
\newblock Geometric structures on filtered manifolds.
\newblock {\em Hokkaido Math.\ J.}, 22:263--347, 1993.

\bibitem{bryant}
M.~Eastwood R.~Bryant, M.~Dunajski.
\newblock Metrisability of two-dimensional projective structures.
\newblock {\em arXiv:0801.0300}, 2008.

\bibitem{Schouten}
J.~A. Schouten.
\newblock {\em Ricci-Calculus. An introduction to tensor analysis and its
  geometrical applications. 2d ed.}
\newblock Berlin, Springer, 1954.

\bibitem{Eastwood}
M.~Eastwood T.~Bailey.
\newblock Conformal circles and parametrizations of curves in conformal
  manifolds.
\newblock {\em Proceedings of the American Mathematical Society}, 108:215--221,
  1990.

\bibitem{tan70}
N.~Tanaka.
\newblock On differential systems, graded lie algebras and pseudo-groups.
\newblock {\em J.\ Math.\ Kyoto.\ Univ.}, 10:1--82, 1970.

\bibitem{CapCor}
A.~\v{C}ap.
\newblock Correspondence spaces and twistor spaces for parabolic geometries.
\newblock {\em J. Reine Angew. Math.}, 582:143--172, 2005.

\bibitem{CapTwo}
A.~\v{C}ap.
\newblock Two constructions with parabolic geometries.
\newblock In {\em Proceedings of the 25th Winter School on Geometry and
  Physics, Srni 2005}, volume~79 of {\em Rend. Circ. Mat. Palermo Suppl. ser.
  II}, pages 11--37, 2006.

\bibitem{CapSloBook}
A.~\v{C}ap and J.~Slov\'{a}k.
\newblock {\em Parabolic geometries. I. Background and general theory.}
\newblock Mathematical Surveys and Monographs, 154. American Mathematical
  Society, Providence, 2009.

\bibitem{yano}
K.~Yano.
\newblock {\em The Theory Of Lie derivatives and its applications}.
\newblock North Holland Publishing Co. - Amsterdam, 1955.

\end{thebibliography}
\bibliographystyle{plain}

\end{document}